\documentclass[9pt,shortpaper,twoside,web]{ieeecolor}
\usepackage{generic}
\usepackage{textcomp}
\def\BibTeX{{\rm B\kern-.05em{\sc i\kern-.025em b}\kern-.08em
    T\kern-.1667em\lower.7ex\hbox{E}\kern-.125emX}}
\usepackage[noadjust]{cite}
\usepackage{amsmath,amssymb,amsfonts}
\usepackage{bigints}
\usepackage{abraces}
\usepackage{mathrsfs}
\usepackage{color}
\usepackage{graphicx}
\usepackage{epsfig} 
\usepackage{hyperref}
\usepackage{subcaption}
\usepackage{caption}

\usepackage{array,booktabs} 
\usepackage{amsmath,amssymb} 
\usepackage{stfloats} 
\usepackage{psfrag} 
\usepackage{color}

\definecolor{box_color}{rgb}{.8,.8,.8}

\usepackage[utf8]{inputenc}
\usepackage[T1]{fontenc}
\usepackage{epstopdf}

\newtheorem{lemma}{Lemma}
\newtheorem{proposition}{Proposition}
\newtheorem{corollary}{Corollary}
\newtheorem{fact}{Fact}

\newtheorem{remark}{Remark}
\newtheorem{example}{Example}

\newtheorem{assumption}{Assumption}
\newtheorem{ass}{C.}


\usepackage{tikz,xcolor,hyperref}

\definecolor{lime}{HTML}{A6CE39}
\DeclareRobustCommand{\orcidicon}{
	\begin{tikzpicture}
	\draw[lime, fill=lime] (0,0) 
	circle [radius=0.16] 
	node[white] {{\fontfamily{qag}\selectfont \tiny ID}};
	\draw[white, fill=white] (-0.0625,0.095) 
	circle [radius=0.007];
	\end{tikzpicture}
	\hspace{-2mm}
}

\foreach \x in {A, ..., Z}{\expandafter\xdef\csname orcid\x\endcsname{\noexpand\href{https://orcid.org/\csname orcidauthor\x\endcsname}
			{\noexpand\orcidicon}}
}

\input epsf

\def\begcen{\begin{center}}
\def\endcen{\end{center}}



\newcommand{\bfx}{\mbox{$x$}}

\def\et{\epsilon_t}
\def\calh{\mathcal{H}}
\def\calk{\mathcal{K}}
\def\ms{{\mathfrak s}}

\def\bfx{{\bf x}}

\newcommand{\RE}{\mathbb {R}}    

\newcommand{\blue}[1]{{\color{blue}{#1}}}





\newcommand{\col}{ \mbox{col} }

\def\calm{{\cal M}}
\def\call{{\cal L}}
\def\nea{\mathbb{N}}

\def\hal{{1 \over 2}}

\def\L2{{\cal L}_2}
\def\L2e{{\cal L}_{2e}}

\def\rea{\mathbb{R}}

\def\x{{x}}

\def\begequarr{\begin{eqnarray}}
\def\endequarr{\end{eqnarray}}
\def\begequarrs{\begin{eqnarray*}}
\def\endequarrs{\end{eqnarray*}}
\def\begarr{\begin{array}}
\def\endarr{\end{array}}
\def\begequ{\begin{equation}}
\def\endequ{\end{equation}}
\def\begmat#1{\begin{bmatrix}#1\end{bmatrix}}
\def\lab{\label}
\def\begdes{\begin{description}}
\def\enddes{\end{description}}
\def\begenu{\begin{enumerate}}
\def\begite{\begin{itemize}}
\def\endite{\end{itemize}}
\def\begmyite{\begin{myitemize}}
\def\endmyite{\end{myitemize}}
\def\endenu{\end{enumerate}}

\def\lef[{\left[\begin{array}}
\def\rig]{\end{array}\right]}
\def\qed{\hfill$\Box \Box \Box$}
\def\begcen{\begin{center}}
\def\endcen{\end{center}}
\def\begrem{\begin{remark}\rm}
\def\endrem{\end{remark}}
\def\begassum{\begin{assumption}}
\def\endassum{\end{assumption}}
\def\begassums{\begin{assumption*}}
\def\endassums{\end{assumption*}}
\def\begassu{\begin{ass}}
\def\endassu{\end{ass}}
\def\beglem{\begin{lemma}}
\def\endlem{\end{lemma}}
\def\begcor{\begin{corollary}}
\def\endcor{\end{corollary}}
\def\begfac{\begin{fact}}
\def\endfac{\end{fact}}
\def\TAC{{\it IEEE Trans. Autom. Control}}
\def\AUT{{\it Automatica}}

\def\SCL{{\it Syst. Control Lett.}}
\def\IJRNLC{{\it Int. J. Robust Nonlinear Control}}
\def\CST{{\it IEEE Trans. Control Syst. Technol.}}

\definecolor{gre}{rgb}{0.0, 0.5, 0.0}


\begin{document}

\title{Immersion and Invariance Orbital Stabilization of Underactuated Mechanical Systems with Collocated Pre-Feedback}
\author{Jose~Guadalupe~Romero\hspace{-0.5mm}\orcidB{}, and Bowen Yi\hspace{-0.5mm}\orcidA{}
\thanks{J.G. Romero is with Departamento Acad\'emico de Sistemas Digitales, ITAM, R\'io Hondo 1,  01080, Ciudad de M\'exico, M\'exico and also with the Department of Mechanical Engineering, The
Hong Kong Polytechnic University (PolyU), KLN, Hong Kong, e-mail: {\tt jose.romerovelazquez@itam.mx}}
\thanks{B. Yi is with Australian Centre for Field Robotics, The University of Sydney, NSW 2006, Australia, e-mail: {\tt b.yi@outlook.com} (corresponding author)}
}
\maketitle
\begin{abstract}
In this note we study the generation of attractive oscillations of a class of mechanical systems with underactuation one. The proposed design consists of two terms, {\em i.e.}, a partial linearizing state feedback, and an immersion and invariance orbital stabilization controller. The first step is adopted to simplify analysis and design, however, bringing an additional difficulty that the model loses its Euler-Lagrange structure after the collocated pre-feedback. To address this, we propose a constructive solution to the orbital stabilization problem via a smooth controller in an analytic form, and the model class identified in the paper is characterized via some easily \emph{apriori} verifiable assumptions on the inertia matrix and the potential energy function.
\end{abstract}
\begin{IEEEkeywords}
Nonlinear systems, immersion and invariance, mechanical systems, orbital stabilization.
\end{IEEEkeywords}
%
%
\IEEEpeerreviewmaketitle

\section{Introduction}
\lab{sec1}
%

Oscillating behaviour of dynamical systems is ubiquitous in biology, physics and engineering \cite{WIN,STR,SCHSAN}. For the latter, we are concerned with \emph{constructive} approaches to generate stable oscillators in closed loop---known as the orbital stabilization problem \cite{KHA}---which widely appears in many engineering areas, including bipedal robots \cite{SPOBUL}, exoskeletons \cite{LVetal}, electric motors \cite{MARbook}, AC power converters \cite{ORTbook}, path following \cite{NIEetal,YIpath}, combustion oscillations, etc. Despite the fact that these control problems may be formulated as trajectory tracking, it brings the following merits to consider them as orbital stabilization:
\begin{enumerate}
\item[1)] There is no need to ensure phase synchronization, {\em i.e.} asymptotic convergence of tangential coordinates (also known as angular variable or isochrons), making the control design procedure more flexible;
\item[2)] The closed-loop dynamics is autonomous, rather than being time-varying in tracking control. As a result, it is unnecessary to adopt motion planning algorithms to generate reference trajectories \cite{SAEetal,SHIetal};
\item[3)] The error dynamics in tracking control usually fails to preserve geometric properties of the original control system. For example, the tracking error dynamics of mechanical systems in general are not in Euler-Lagrange (EL) or Hamiltonian forms, unless imposing additional assumptions \cite{ROMetaltac}.
\end{enumerate}

Unlike point regulation, there are only a few tools for orbital stabilization, which may be generally classified into the following two categories. The first class is based on the concept of energy, see for examples \cite{FRAPOG,YIetal,ARAetal,SPOBUL,STASEP}, which rely on either shaping the total energy with minima on the desired orbit, or regulating the energy to some value. In \cite{YIetal}, these two ideas are systematically studied from the perspective of port-Hamiltonian systems, and their equivalence has been revealed. Another widely studied technical route builds upon a geometric perspective. In \cite{BANHAU}, the authors consider the case that in a neighborhood of a given orbit a change of coordinate may be found locally to decompose the systems state into transverse coordinates and phase variable, and together with a feedback transformation, the resulting transverse dynamics are linear in order to deal with orbital stabilization. Later, an idea based on transverse linearisation was elaborated for underactuated EL mechanical systems in \cite{SHIetal,SHIetal2010}, known as the virtual holonomic constraint (VHC) approach, which may conceptually date back to \cite{APP} published in 1911; see also its applications in robotics \cite{WESetal}. The key step is using feedback linearization to impose a geometric constraint of configuration variables with respect to a new independent variable, with the obtained zero dynamics behaving as oscillations. Note that a transverse linearization requires knowledge of a particular periodic orbit. Recently, the immersion and invariance (I\&I) technique, which was originally proposed for nonlinear and adaptive control and observers design \cite{IIbook}, is adopted for orbital stabilization of nonlinear systems \cite{Ortetal20}. This method can be viewed as an extension of VHC to general nonlinear systems, but with a key difference of the design procedure. Namely, in I\&I a lower dimensional-oscillator needs to be selected in advance, then solving the Francis-Byrnes-Isidori (FBI) equations.

In this paper, the I\&I orbital stabilization approach is tailored for a class of underactuated mechanical systems.\footnote{In this paper the terminology ``orbital stabilisation'' refers to generation of stable oscillators with a group of feasible orbits. Note that in some works it refers to the convergence to a particular orbit \cite{MOHetal2,SHIetal}. } To be precise, the main contributions are summarized as follows.

\begin{itemize}
\item[\bf C1] We identify a class of underactuated mechanical systems, which can be orbitally stabilized via integrating collocated feedback linearization with an elaborated I\&I controller, and obtain a compact {\it analytic} formula. 

%
%
\item[\bf C2] In the VHC approach, there are two key problems concerned, {\em i.e.}, stabilizing systems state to a constraint manifold and analyzing motion properties of the reduced dynamics on the manifold. However, full-state boundedness in closed loop is rarely discussed, which may be not guaranteed in many cases.\footnote{
The reduced dynamics in VHC is obtained on the constraint manifold. However, there is an asymptotically decaying term in addition to the (nominal) reduced dynamics when the state converges to the manifold in the transient stage, which may cause unboundedness of the closed loop.} To address this, the state boundedness is analyzed comprehensively in the paper. 

%
\end{itemize}

The remainder of the paper is organized as follows. The EL model which we are interested in and the precise problem formulation are described in Section \ref{sec3}. We present the main result in Section \ref{sec4}, and give more details on connection to other related works in Section \ref{sec:discussion}. In Section \ref{sec5} we use two benchmark models to demonstrate the controller design procedure and evaluate the performance of the proposed scheme. In Section \ref{sec6} we present some concluding remarks. Finally, to make the article as self-contained as possible, the description of the I\&I orbital stabilization technique \cite{Ortetal20} is recalled in Appendix.

\vspace{0.2cm}
%
\noindent {\bf Notation.} Throughout the paper, we assume all mappings $C^2$-continuous. $I_n$ is the $n \times n$ identity matrix and $0_{n \times s}$ is an $n \times s$ matrix of zeros. For $x \in \rea^n$, $S \in \rea^{n \times n}$, $S=S^\top >0$, we denote the Euclidean norm $|x|^2:=x^\top x$, and the weighted--norm $\|x\|^2_S:=x^\top S x$. Given a function $f: \rea^n \to \rea^m$ we define the differential operators
$
\nabla f:= ({\partial f \over \partial x})^\top,
~
\nabla_{x_i} f:= ({\partial f \over \partial x_i})^\top,
%
$
in which $x_i$ is the $i$-th element of the vector $x$. We use $\mathbb{S}$ to denote the unit circle, and $g^\perp:\rea^n \to \rea^{(n-m) \times n}$ as a full-rank left-annihilator of $g(x) \in \rea^{n\times m}$. For a multi-variable smooth function $V(x,\xi)$, when clear we use ${\partial V \over \partial \xi}(a,b)$ to denote $ {\partial V (x,\xi)\over \partial \xi}\big|_{x=a,\xi=b}$. 

\section{Mechanical systems and problem formulation}
\lab{sec3}
%
In this note we consider underactuated mechanical systems with two degrees of freedom (DOF's)\footnote{We adopt this assumption to simplify the presentation. Indeed, the proposed approach can be extended straightforwardly to underactuated mechanical systems with arbitrary DOF's and degree of underactuation one.}, the dynamics of which is described by the EL equations of motion
 \begequarr
 \label{lagr}
M(q) \ddot{q} + C(q,\dot{q}) \dot{q} + \nabla V(q) = G  \tau,
 \endequarr
in which $q \in  \mathbb{S} \times \rea$ are the configuration variables, $\tau \in \RE^1$ is the control input, $M(q) >0$ is the generalized inertia matrix, $C(q,\dot{q})\dot q$ represents the Coriolis and centrifugal forces, $V(q)$ is the potential energy. To simplify the notation, we define $q=\col(q_u,q_a)$, with $q_a$ as the actuated coordinate and $q_u$ the unactuated one and, without loss of generality, the  input matrix is of the form
$$
G= \left[ \begarr{cc} 0 & 1 \endarr \right]^\top.
$$

Now,  the class of systems to be addressed  in the note are characterized  by the the following assumption.

\begin{itemize}
\item[{{\bf A1.}}] All elements of inertia matrix depend only on the unactuated variable $q_u$ and it has the form
 \begin{equation}
 \label{matM}
 M (q_u)= \left[ \begarr{cc} m_{uu} (q_u) & m_{au} (q_u) \\ m_{au} (q_u) & m_{aa}  (q_u) \endarr \right].
 \end{equation}
Since the inertia matrix is positive definite we have that  $m_{aa}(q_u), m_{uu}(q_u)$ are positive functions for all $q_u$. 
\end{itemize}

Now, using {\bf A1}, the EL equation \eqref{lagr} takes the form
{\small
\begin{align}
m_{uu}(q_u) \ddot q_u + c_a(q_u) \dot q_u^2 + \bar c_u(q_u) \dot{q}_a^2  + \nabla_{u} V_u(q) &\,=  0 \nonumber \\
m_{au}(q_u) \ddot q_u+m_{aa}(q_u) \ddot q_a+c_p(q_u) \dot q_a \dot q_u +c_s(q_u) \dot q_a^2 +\nabla_{a}V_u(q) &\, =  \tau \nonumber \\  \lab{newelsys1}
\end{align}
}with gravity terms $\nabla_{q_u} V_u(q)= \nabla_u V_u(q)$, $\nabla_{q_a} V_u(q)= \nabla_a V_u(q)$ and  Coriolis terms $c_a$, $\bar c_u$, $c_p$, $c_s$ $: \rea \rightarrow \rea$.  
 Thus, there exists a collocated partial linearizing control   $u_{\tt PL}:\mathbb{S} \times \rea \times \rea^2 \to \rea$ of the form \cite{Spong1994}
 \begin{align*}
 u_{\tt PL}(q,\dot q)=&\, R(q_u) u +c_p(q_u) \dot q_a \dot q_u +c_s(q_u) \dot q_a^2 +\nabla_{a}V_u(q)  \\
  & -m_{au}(q_u)m^{-1}_{uu}(q_u) [\bar c_u(q_u) \dot q_a^2 +c_a(q_u) \dot q_u^2+\nabla_{u}V_u(q) ]
 \end{align*}
 with $R(q)=[m_{aa}(q_u)-m_{au}^2(q_u)m^{-1}_{uu}(q_u)]$, such that the system \eqref{newelsys1} in closed loop with the static state--feedback control law
$
\tau=u_{\tt PL}(q,\dot q)
$
leads us to 
\begin{align}
\ddot q_a = & u \lab{newelsys}\\
m_{uu}(q_u) \ddot q_u +  \bar c_u(q_u) \dot{q}_a^2+c_a(q_u) \dot q_u^2  + \nabla_{u} V_u(q) = &-m_{au}(q_u) u.  \nonumber
\end{align}
At this point, it is important to highlight some properties of the system \eqref{newelsys}:

\begin{enumerate}
\item There does not exist a Lagrangian function verifying the EL equation \eqref{newelsys}.
\item There is not an energy function verifying the passivity of the dynamical model \eqref{newelsys}.
\item The potential energy is not separable, {\it i.e.,} there are not functions $V_x \in \rea$ and $V_y \in \rea$, so that $V_u(q_u,q_a) = V_{x}(q_a)+V_{y}(q_u)$. See the model of the Pendubot system in Section \ref{sec5}.
\end{enumerate}

Since we are addressing underactuated systems with 2 DOF's, the dynamical model \eqref{newelsys} can be written in the form of \eqref{sys}
with  ${\bfx}=\col({q_u,\,q_a,\,\dot q_u,\, \dot q_a})$ and
\begin{align}
 f(\bfx) = &
 \begmat{ \bfx_3 \\ \bfx_4 \\ - {1\over m_{uu}(\bfx_1)} \big( \bar c_u(\bfx_1)\bfx_4^2 +c_a(\bfx_1)\bfx_3^2+\nabla_{u} V_u(\bfx_1, \bfx_2)\big) \\ 0 }
  \label{fx}
\\
 \label{gx}
  g(\bfx) =& 
   \left[ \begarr{cccc} 0 & 0 & -{ m_{au}(\bfx_1) \over m_{uu}(\bfx_1)}  & \blue{1}
 \endarr \right]^\top,
\end{align}
In this paper, we are interested in the following orbital stabilization problem.

\noindent {\bf Problem Formulation}.  Find a mapping $u(\bfx)$ such that, the closed loop $\dot\bfx = f(\bfx) + g(\bfx) u(\bfx)$, with $f(\bfx)$, $g(\bfx)$ given by \eqref{fx} and \eqref{gx} respectively, has a non-trivial periodic solution $X: \rea_+ \to \rea^4$, {\em i.e.},
$$
\{{\bfx} \in \rea^4\;|\; \bfx=X(t),\;0 \leq t \leq T\},\quad T>0
$$
that is orbitally attractive, and the systems state is bounded. 
%
\vspace{-.47cm}
\section{Main result}
\label{sec4}
Before presenting the main result, the following assumptions are announced. 

 \begite

\item[{\bf A2.}] The term  $m_{uu}(\bfx_1)$ is bounded, and $m_{au}(\bfx_1)$ and $\bar c_u(\bfx_1)$ are not equal to zero in the set $\mathbb{S}$.

\item[{\bf A3.}]  There exist two smooth, bounded and non-zero functions ${\mathfrak s}(\bfx_1)$ and ${\mathcal K} (\bfx_1)$ verifying the following. 

\begite
\item[{\bf (a)}] The function ${\mathcal K} (\bfx_1)$ satisfies
\begin{equation}
\label{maths}
{\mathcal K}'(\bfx_1)= \frac{{\mathfrak s}(\bfx_1)-m_{uu}(\bfx_1)}{m_{au}(\bfx_1)}
\end{equation}
or equivalently, $\mathfrak s(\bfx_1)$ defined as 
$
{\mathfrak s}:=m_{uu}+m_{au} {\mathcal K}'. 
$

\item[{\bf (b)}] The function
\begin{equation}
\label{m}
 m(\bfx_1)= \exp \left({-2  \int_0^{\bfx_1} \beta(s)  ds}\right)
 \end{equation}
 is positive on the set $\mathbb{S}$ with 
\begin{equation}
 \hspace{-.1cm} \beta(\bfx_1)=-{{{\bar c_u (\bfx_1){\mathcal K}'(\bfx_1)^2+c_a(\bfx_1)+m_{au}(\bfx_1) {\mathcal K}''(\bfx_1)}}\over{{\mathfrak s}(\bfx_1)}}.
 \label{bet}
\end{equation}

\item[{\bf (c)}] The function $U(\bfx_1)$ has an isolated minimum point\footnote{It can be extended to the case with several isolated minima, see Remark \ref{remark:minima} for a discussion.}, around which is of interest to generate periodic oscillations, and it is given by
 \begin{equation}
 \label{pote}
 U(\bfx_1)= - \int_0^{\bfx_1} \rho(s) m(s) ds
 \end{equation}
with
\begin{equation}
\label{rho}
\rho(\bfx_1)=-{{ \nabla_uV_u(\bfx_1 , {\mathcal K}(\bfx_1))}\over{{\mathfrak s}(\bfx_1)}}.
\end{equation}
\endite
\endite

\begin{remark}
Clearly, from {\bf A2} we may impose the well-posedness of $({\mathfrak s}-m_{uu})/{m_{au}}$ on a closed interval, and the existence of $\calk$ can be guaranteed since it only depends on a scalar variable.
\end{remark}


%
\vspace{.3mm}

Now, we are in position to describe the main result of the note.
\begin{proposition}
\label{pro2}
Consider  the system \eqref{sys} with $f(\bfx)$ and $g(\bfx)$ given by \eqref{fx} and \eqref{gx} respectively, with the controller\footnote{The whole controller $\tau$ contains the pre-feedback $u_{\tt PL}$ and the term $u$. The latter in the paper resembles the widely popular VHC feedback \cite{MAGCON}.}
\begin{align}
\label{controlu}
u = &-{{\mathcal K}' (\bfx_1) \left(\bfx_4^2 \bar c_u(\bfx_1)+ \bfx_3^2  c_a(\bfx_1) \right) -m_{uu}(\bfx_1) {\mathcal K}'' (\bfx_1)\bfx_3^2\over{{\mathfrak s}(\bfx_1)}} \nonumber \\
& - 
{
{{\mathcal K}'  (\bfx_1)\nabla_u V_u(\bfx_1, \bfx_2) +{m_{uu}}(\gamma_1 \phi_1(\bfx)+\gamma_2 \phi_2(\bfx)) }\over{{\mathfrak s}(\bfx_1) }
}
,
\end{align}
 function
\begin{equation}
\label{phi}
\phi(\bfx) 
 =  \left[ \begarr{c} \bfx_2-{\mathcal K}(\bfx_1)  \\
\bfx_4-{\mathcal K}'(\bfx_1) \bfx_3
 \endarr \right]
\end{equation}
and the mapping $ {\mathfrak s}(\bfx_1)$, ${\mathcal K}(\bfx_1)$ verifying {\bf A3(a)}, in which $\gamma_1, \gamma_2>0$ are the controller gains. Then, following the  I\&I technique described in Appendix, the controller solves the problem of orbital stabilization with non-trivial orbits.
\end{proposition}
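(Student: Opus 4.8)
The plan is to recognize Proposition~\ref{pro2} as a concrete instantiation of the I\&I orbital-stabilization construction recalled in the Appendix, and to supply its ingredients together with a check of the attached conditions. I would take as off-the-manifold coordinate the map $\phi(\bfx)$ in \eqref{phi}; as target (immersed) manifold the virtual holonomic constraint $\calm:=\{\bfx:\ \bfx_2=\calk(\bfx_1),\ \bfx_4=\calk'(\bfx_1)\bfx_3\}=\phi^{-1}(0)$; and as target oscillator the reduced $(\bfx_1,\bfx_3)$-dynamics obtained by restricting the closed loop to $\calm$. The proof then splits into: (a) $\calm$ is rendered invariant by \eqref{controlu} and the restricted motion is a planar oscillator possessing a first integral with a strict minimum, hence a family of non-trivial periodic orbits; (b) $\calm$ is attractive; (c) every trajectory is bounded and converges to a periodic orbit on $\calm$, which is the required orbital attractivity in the sense of the footnote.

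For (a)--(b), I would substitute \eqref{controlu} into $\dot\bfx=f(\bfx)+g(\bfx)u$ with $f,g$ from \eqref{fx}--\eqref{gx} and compute $\dot\phi$. By construction $\dot\phi_1=\phi_2$; for $\dot\phi_2=u-\calk''(\bfx_1)\bfx_3^2-\calk'(\bfx_1)\dot\bfx_3$ the key simplification is the identity $1+\calk'(\bfx_1)m_{au}(\bfx_1)/m_{uu}(\bfx_1)=\ms(\bfx_1)/m_{uu}(\bfx_1)$ from {\bf A3(a)}; after inserting the explicit $u$, all terms not proportional to $\phi$ cancel and one is left with the linear system $\dot\phi_1=\phi_2$, $\dot\phi_2=-\gamma_1\phi_1-\gamma_2\phi_2$, which is exponentially stable because $\gamma_1,\gamma_2>0$. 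Thus $\calm$ is invariant and $\phi(t)\to 0$ exponentially. Setting $\phi\equiv 0$ in the $\bfx_3$-equation of \eqref{newelsys} with $u=u|_{\calm}$ and using $\ms=m_{uu}+m_{au}\calk'$ again, the algebra should collapse to the reduced dynamics $\ddot\bfx_1=\beta(\bfx_1)\bfx_3^2+\rho(\bfx_1)$ with $\beta,\rho$ exactly as in \eqref{bet},\eqref{rho}. Writing this as $\tfrac12\frac{d}{d\bfx_1}(\bfx_3^2)-\beta\bfx_3^2=\rho$ and multiplying by the integrating factor $m$ of \eqref{m} (which satisfies $m'=-2\beta m$) shows, using $U$ from \eqref{pote} (so $U'=-\rho m$), that $E(\bfx_1,\bfx_3):=\tfrac12 m(\bfx_1)\bfx_3^2+U(\bfx_1)$ is a first integral of the reduced dynamics. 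Since $m>0$ on $\mathbb{S}$ by {\bf A3(b)} and $U$ has an isolated minimizer by {\bf A3(c)}, $E$ has a strict local minimum there, whose nearby level sets are closed curves free of equilibria; lifted to $\calm$ through the immersion $\bfx_2=\calk(\bfx_1)$, $\bfx_4=\calk'(\bfx_1)\bfx_3$, each is a non-trivial periodic solution of the closed loop.

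For (c), the same computation carried out off $\calm$ gives $\ddot\bfx_1=\beta(\bfx_1)\bfx_3^2+\rho(\bfx_1)+\Delta(\bfx)$, where, using {\bf A2} and the fact that $\bfx_1\in\mathbb{S}$ (so all $\bfx_1$-dependent coefficients are bounded), $\Delta$ vanishes on $\calm$, is affine in $\bfx_3$, and has every monomial carrying a factor $\phi_1$ or $\phi_2$; the term $\nabla_u V_u(\bfx_1,\bfx_2)-\nabla_u V_u(\bfx_1,\calk(\bfx_1))$ is absorbed into $\Delta$ since it vanishes at $\phi_1=0$. Differentiating $E$ along the closed loop, the $\beta$- and $\rho$-contributions cancel exactly (again by $m'=-2\beta m$, $U'=-\rho m$) and one gets $\dot E=m(\bfx_1)\bfx_3\,\Delta(\bfx)$; with $\bfx_3^2\le 2(E-\min_{\mathbb{S}}U)/\min_{\mathbb{S}}m$ and the exponential decay of $\phi$ this yields $|\dot E|\le c_1 e^{-\lambda t}(1+E)$ for suitable $c_1,\lambda>0$, so Gr\"onwall's inequality gives boundedness of $E(t)$, hence of $\bfx_3$, and then boundedness of $\bfx_2=\phi_1+\calk(\bfx_1)$ and $\bfx_4=\phi_2+\calk'(\bfx_1)\bfx_3$ follows from boundedness of $\phi$, $\calk$, $\calk'$ and $\bfx_1\in\mathbb{S}$; this establishes full-state boundedness. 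Moreover $\int_0^\infty|\dot E|\,dt<\infty$, so $E(t)$ converges, whence the $\omega$-limit set of a trajectory started close to $\calm$ near a level set of $E$ lies in $\calm$ on a single level set of $E$, i.e.\ on a periodic orbit; invoking the I\&I result of the Appendix then completes the proof.

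The delicate step is the boundedness claim, which is contribution {\bf C2}: because the reduced dynamics is only conservative, the cascade from $\phi$ to $(\bfx_1,\bfx_3)$ is not input-to-state stable and a crude estimate fails, so one must use the first integral $E$ as a weak Lyapunov function; its derivative cancels down to $m\bfx_3\Delta$ only because of the precise definitions \eqref{m} and \eqref{pote}, and the estimate then closes only because (i) $\Delta$ turns out, after all cancellations, to be merely affine rather than quadratic in $\bfx_3$, (ii) $\phi$ decays exponentially by the Hurwitz $\phi$-subsystem, and (iii) $\mathbb{S}$ is compact so $U$ and all coefficients are bounded. The routine parts are the vector-field computations of $\dot\phi$ and of the restricted dynamics; the genuinely non-trivial bookkeeping is isolating the $\phi$-dependent remainder $\Delta$ and controlling it.
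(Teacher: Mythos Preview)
Your proposal is correct and follows essentially the same route as the paper: the same target dynamics and immersion $\pi$, the same off-the-manifold coordinate $\phi$, the same first integral $E=\calh=\tfrac12 m\bfx_3^2+U$, and the same decomposition of the closed loop into an exponentially stable linear $\phi$-subsystem cascaded with a conservative $(\bfx_1,\bfx_3)$-system perturbed by an exponentially decaying, $\bfx_3$-affine term. The only noteworthy difference is in the final boundedness estimate: the paper obtains $\dot\calh_x\le a_1e^{-k_1t}\calh_x+a_2e^{-k_2t}\sqrt{\calh_x}$ and solves the associated Bernoulli-type comparison ODE explicitly, whereas your Gr\"onwall bound on $1+E$ reaches the same conclusion more directly.
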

\begin{proof}
We verify the conditions {\bf Y1}-{\bf Y4} of the I\&I method  making use of the following target system
\begin{eqnarray}
\label{tsys}
 \dot{\xi}_1&=& \xi_2, \\
 \dot{\xi}_2&= & \rho(\xi_1)+ \beta(\xi_1)\xi_2^2,
\end{eqnarray}
with $\rho(\xi_1)$ and $\beta(\xi_1)$ given by \eqref{rho} and \eqref{bet}, respectively, and the mapping
\begin{equation}
\label{pi}
\pi(\xi) =   \left[ \begarr{cccc} \xi_1 & {\mathcal K}(\xi_1) &  \xi_2& {\mathcal K}' (\xi_1) \xi_2
 \endarr \right]^\top.
\end{equation}
First, let us verify {\bf Y1}, {\em i.e.}, the target dynamics \eqref{tsys} has periodic solutions. To the end, we construct a function
  \begin{equation}
 \label{Hxi}
 {\mathcal H}(\xi) := \frac{1}{2} m(\xi_1) \xi_2^2 + U(\xi_1)
 \end{equation}
with $U$ defined in \eqref{pote} and $m$ defined in \eqref{m}, then yielding
$$
\begin{aligned}
 {\partial \calh \over \partial \xi_1} 
 & ~=~ - m_1(\xi_1) [\beta(\xi_1)\xi_2^2 + \rho(\xi_1)]
 \\
 {\partial \calh \over \partial \xi_2} & ~=~ m(\xi_1)\xi_2.
\end{aligned}
$$
Hence, the target dynamics \eqref{tsys} can be written in a Hamiltonian form
\begin{equation}
\label{tsyst_H}
\dot \xi = J(\xi) \nabla \calh(\xi),\quad 
J := \begmat{0 & {1\over m(\xi_1)} \\ - {1\over m(\xi_1)} & 0},
\end{equation}
in which $\calh$ is the total energy, and $U(\xi_1)$ may be referred as the potential energy. Since the time derivative of $\calh$ along the trajectories of \eqref{tsyst_H} verifies 
$\dot \calh = 0$, thus making it a undamped (conservative) system. For a given initial condition $\xi(0)$, its trajectory is forward invariant in the set
$$
\Omega_{\xi(0)}:= \{\xi\in \rea^2~|~ \calh(\xi) = \calh(\xi(0))\}.
$$

Now, let us characterize the topology of level sets of the function $\calh(\xi)$. By solving $\nabla \calh = 0$ and invoking the assumption {\bf A3(c)} and the fact $m(\xi_1)>0$, the Hamiltonian $\calh(\xi)$ has a global (isolated) minimum point at $(\xi_1^*, 0)$ for some $\xi_1^*$. We consider the following auxiliary system 
\begin{equation}
\label{aux_syst}
\dot\eta = [ J({\eta}) - R] \nabla \calh({\eta}), \quad R := rI_2
\end{equation}
for some $r>0$ with $\eta\in \rea^2$. The time derivative of $\calh$ along the auxiliary system \eqref{aux_syst} satisfying
$$
\dot \calh = - \|\nabla \calh(\eta)\|_R^2 \le 0.
$$
From some basic Lyapunov analysis, we may obtain that $(\xi_1^*,0)$ is a globally asymptotically stable equilibrium for the system \eqref{aux_syst}, and $\calh(\eta)$ is qualified as a Lyapunov function. According to \cite[Thm. 1.2]{WIL}, for any $c$ satisfying
$
\calh(\xi_1^*,0) < c < \sup \calh(\eta),
$
 the level set $\{{\eta}\in\rea^2 ~|~\calh ({\eta})= c\}$ is diffeomorphic to $\mathbb{S}^1$. Now, let us come back to the target dynamics \eqref{tsyst_H}. Since there is no equilibrium in the level set $\{\xi\in\rea^2 ~|~\calh (\xi)= c\}$ with $c> \calh(\xi_1^*,0)$, all the trajectories of \eqref{tsyst_H}, excluding the isolated equilibrium, are non-trivial periodic solutions. Thus, we have verified the condition {\bf Y1}.

Regarding the condition {\bf Y2}, a feasible left annihilator of \eqref{gx} is
\begin{equation}
g^\perp=  \left[ \begarr{cccc} 0 & 0 &1 &m_{au}(\xi_1)\over{m_{uu}(\xi_1) }
 \endarr \right],
\end{equation}
and then, with mapping $\pi$ given by \eqref{pi}, the FBI equation in {\bf Y2} takes the form
\begin{align}
0=& \, {\bar c_u(\xi_1){\mathcal K}'(\xi_1)^2 \xi_2^2+ c_a(\xi_1) \xi_2^2+\nabla_u V_u(\xi_1, {\mathcal K}(\xi_1)) \over{ m_{uu}(\xi_1)}}  + \beta(\xi_1) \xi_2^2  \nonumber \\
& \rho(\xi_1) +{m_{au}(\xi_1)\over{m_{uu}(\xi_1)}}  \Big [ {\mathcal K}'' \xi_2^2  + {\mathcal K}' (\xi_1) \rho(\xi_1) + {\mathcal K}' (\xi_1) \beta(\xi_1) \xi_2^2\Big],  \nonumber \\
  \end{align}
which can be written as
{\small
\begin{align}
 0=&\, { c_a(\xi_1) +\bar c_u(\xi_1) {\mathcal K}'(\xi_1)^2\over{ m_{uu}(\xi_1)}} +\beta(\xi_1)+{m_{au} (\xi_1)\over{m_{uu}(\xi_1)}}  \Big[ {\mathcal K}' (\xi_1) \beta(\xi_1) + {\mathcal K}'' \Big]\nonumber \\
 0=&   -{\nabla {V_u(\xi_1, {\mathcal K}(\xi_1))} \over{ m_{uu}(\xi_1)}}  -\rho(\xi_1) -{m_{au}(\xi_1)\over{m_{uu}(\xi_1)}} {\mathcal K}' (\xi_1) \rho(\xi_1).
\end{align}
}
We note that these two equations may be rewritten as
\begin{align}
\label{beta2}
\beta(\xi_1)=&-\frac{m_{au}(\xi_1) {\mathcal K}''_1(\xi_1) +\bar c_u (\xi_1) {\mathcal K}'(\xi_1)^2 +c_a(\xi_1)}{m_{uu}(\xi_1)+m_{au}(\xi_1) {\mathcal K}'(\xi_1)} \\
\rho(\xi_1)=&-\frac{\nabla_u V_u (\xi_1, {\mathcal K(\xi_1))}}{m_{uu}(\xi_1)+m_{au}(\xi_1){\mathcal K}'(\xi_1) }.
\end{align}
%

 Moreover, both equations are non-singular if the denominator is different from zero. Hence, invoking {\bf A3(a)} we have that
\begin{equation}
m_{uu}(\xi_1)+m_{au}(\xi_1) {\mathcal K}'(\xi_1) ={\mathfrak s}(\xi_1)
\end{equation}
with free mapping ${\mathfrak s}(\bfx_1)\not = 0$, or equivalently expressed as
\begin{equation}
{{\mathcal K}'(\xi_1)}=\frac{{\mathfrak s}(\xi_1)-m_{uu}(\xi_1)}{m_{au}(\xi_1)}.
\end{equation}
On the other hand, after some straightforward calculations one finds that  the control $c(\pi(\xi))$ in \eqref{c} is given by
\begin{align}
c(\xi) &=-\frac{1} {{\mathfrak s}(\xi_1)}  \Big[ {\mathcal K}'(\xi_1) \big[\bar c_u (\xi_1) {\mathcal K}'(\xi_1)^2 +c_a(\xi_1) \big]\xi_2^2  +     \nonumber \\
& \hspace{1.5cm} +{\mathcal K}'(\xi_1) \nabla_u V_u\left(  \xi_1 ,{\mathcal K} (\xi_1)\right) -m_{uu}(\xi_1) {\mathcal K}''(\xi_1)\xi^2_2  \Big].
\label{controlc}
\end{align}

The implicit manifold description of condition {\bf Y3} is satisfied selecting $\phi(x)$ as \eqref{phi}. To complete the design we need to verify condition {\bf Y4}. Thusly, using \eqref{phi} we define the off-the-manifold coordinate
\begin{eqnarray}
\label{zs}
z_1= \bfx_2 -{\mathcal K}(\bfx_1) , \quad 
z_2=\bfx_4-{\mathcal K}' (\bfx_1) \bfx_3,
\end{eqnarray}
 taking its time derivative yields\footnote{ Given the limit of pages--and for clarity--some arguments have been omitted. }
 \begin{align}
 \dot z_1~ =&~z_2 \nonumber \\
 \dot z_2~ =& ~ u \frac{ {\mathfrak s}(x_1)}{m_{uu}} +\frac{{\mathcal K}' }{m_{uu}} \Big[   \bar c_u  \bfx_4^2 +c_a \bfx_3^2 + \nabla_u V_u(\bfx_1,\bfx_2)\Big]- {\mathcal K}'' x_3^2, \nonumber
 \end{align}
 in which we have used \eqref{maths} to get the second identity. We choose the control
$u = v(\bfx, z)$ as
 \begin{eqnarray}
 \label{controlv}
 v(\bfx, z)&=& {-{\mathcal K}'   [\bar c_u \bfx_4^2+ c_a \bfx_3^2]-  {\mathcal K}' \nabla_u V_u+m_{uu}{\mathcal K}'' \bfx_3^2  \over{{\mathfrak s}(\bfx_1)}} \nonumber \\
 &&-{ m_{uu} (\gamma_1 z_1 + \gamma_2 z_2) \over {\mathfrak s}(\bfx_1)} 
 \end{eqnarray}
 which, considering \eqref{controlc}, verifies the constraint \eqref{concon}. Furthermore, from the fact that $z=\phi(\bfx)$, the controller  \eqref{controlv} is equivalent to \eqref{controlu}.
\\
Now, the closed-loop dynamics is
\begin{align}
\dot z_1 =&z_2   \nonumber \\
\dot z_2  = & -\gamma_1 z_1 -\gamma_2 z_2 \nonumber  \\
\dot \bfx_1=& \bfx_3  \nonumber \\
\dot \bfx_2= &\bfx_4 \nonumber  \\
\dot \bfx_3=& \beta(\bfx_1)\bfx_3^2  -\frac{\nabla_u V_u (\bfx_1, {\mathcal K}(\bfx_1)+z_1)}{{\mathfrak s}(\bfx_1)} \nonumber \\
&-\frac{\bar c_u(\bfx_1) \left[ z_2 +2 {\mathcal K}'(\bfx_1) \bfx_3\right]z_2}{{\mathfrak s}(\bfx_1)} 
+ \frac{m_{au}(\bfx_1)}{{\mathfrak s}(\bfx_1)} \left( \gamma_1 z_1 + \gamma_2 z_2 \right) \nonumber  \\
\dot \bfx_4=& v(\bfx, z).
\label{fsys}
\end{align}

The first two equations ensure that $z(t) \rightarrow 0$ exponentially fast. Moreover, from {\bf A2}, $\bfx_1 \in \mathbb{S}$ and using the first equality of \eqref{zs}, the variable $\bfx_2$ remains bounded.

In the above, we have verified the exponential convergence $|z| \to 0$ as $t\to\infty$ and the boundedness of $(\bfx_1,\bfx_2)$. The remainder is to show boundedness of the other system states.

To prove the boundedness of  $\bfx_3$ we make use of {\bf A3} and the subsystem $\bfx_1$ and $\bfx_3$ of \eqref{fsys}. Hence, from {\bf A3(a)} and {\bf A3(b)} there exist constants $s_{\min}$,  $s_{\max}$,  $m_{\min}$ and $m_{\max}$ verifying 
\begin{equation}
\label{cots}
0<  s_{\min} \leq  {\mathfrak s}(s) \leq s_{\max}, \quad 
0 <m_{\min} \leq m(s) \leq m_{\max}  
\end{equation}
for all  $s \in \mathbb{S}$.

Now, computing the time derivative of the energy function ${\mathcal H}_x (\bfx_1,\bfx_3) :={\mathcal H} (\xi_1, \xi_2)|_{\xi_1=\bfx_1,\xi_2 = \bfx_3}$ given by \eqref{Hxi}, along the dynamics  $\dot \bfx_1$ and $\dot \bfx_3$ defined in \eqref{fsys} we obtain

\begin{align}
\dot {\mathcal H}_x =&\, \bfx_3 m(\bfx_1) \dot \bfx_3 - \bfx_3^3\beta(\bfx_1) m(\bfx_1) -m(\bfx_1) \rho(\bfx_1) \bfx_3  \nonumber \\
%
=&    {\nabla_uV_u (\bfx_1,{\mathcal K}(\bfx_1))- \nabla_u V_u (\bfx_1,{\mathcal K}(\bfx_1)+{z_1})  \over{\mathfrak s}(\bfx_1)}m(\bfx_1) \bfx_3  \nonumber  \\
&-\frac{\bar c_u(\bfx_1) \left[ { z_2} +2 {\mathcal K}'(\bfx_1) \bfx_3\right]{ z_2}}{{\mathfrak s}(\bfx_1)}  m(\bfx_1)  \bfx_3 \nonumber \\
&+\frac{m(\bfx_1)}{{\mathfrak s}(\bfx_1)}\bfx_3 m_{au}(\bfx_1){(\gamma_1z_1+\gamma_2z_2)}.
\label{dotH}
\end{align}
Regarding the first term, it yields
$$
\begin{aligned}
& ~\Big|\nabla_u V_u (\bfx_1,{\mathcal K}(\bfx_1))- \nabla_u V_u (\bfx_1,{\mathcal K}(\bfx_1)+z_1)\Big|
\\	
= & ~
\left|\int_0^1  {\partial^2 V_u \over \partial \bfx_2 \partial \bfx_1}
(\bfx_1, \calk(\bfx_1) + sz_1) ds \cdot z_1\right|
\\
\le & ~\int_0^1  \left|{\partial^2 V_u \over \partial \bfx_2 \partial \bfx_1}
(\bfx_1, \calk(\bfx_1) + sz_1)\right| ds \cdot |z_1|.
\end{aligned}
$$
Due to $\bfx_1 \in \mathbb{S}$, $\calk(\bfx_1) + sz_1 \in \call_\infty$ for $s\in [0,1]$ and the continuity of ${\partial^2 V_u \over \partial \bfx_2 \partial \bfx_1}$, we have that
$$
 \left|{\partial^2 V_u \over \partial \bfx_2 \partial \bfx_1}
(\bfx_1, \calk(\bfx_1) + sz_1)\right| < + \infty
$$
uniformly. Since $z$ is exponentially convergent to zero, the term
$$
\Big[\nabla_uV_u (\bfx_1,{\mathcal K}(\bfx_1))- \nabla_uV_u (\bfx_1,{\mathcal K}(\bfx_1)+z_1) \Big]
$$
in \eqref{dot_Hx} also converges to zero exponentially fast. On the other hand, the terms ${\mathfrak s}(\bfx_1), ~\bar c_u(\bfx_1),~ \mathcal{K}'(\bfx_1)$ and $m(\bfx_1)$ are all bounded due to the continuity of these functions and $\bfx_1 \in \mathbb{S}$. Hence, the time derivative of $\calh_x$ given by \eqref{dotH} takes the form
\begequ
\label{dot_Hx}
\dot \calh_x = \epsilon_1(t)\bfx^2_3  + \epsilon_2(t) \bfx_3
\endequ
with two exponentially convergent terms $\epsilon_1$ and $\epsilon_2$ defined as
$$
\begin{aligned}
 \epsilon_1 & :=
 -  2{\bar c_u(\bfx_1)\mathcal{K}'(\bfx_1)z_2 m(\bfx_1) \over \ms(\bfx_1)}
 \\
 \epsilon_2 & :=
 {\nabla_uV_u (\bfx_1,{\mathcal K}(\bfx_1))- \nabla_u V_u (\bfx_1,{\mathcal K}(\bfx_1)+z_1)  \over{\mathfrak s}(\bfx_1)}m(\bfx_1)
 \\
 &~
\quad -
 {\bar c_u(\bfx_1) z_2^2 m(\bfx_1) \over \ms(\bfx_1)}
 +
 \frac{m(\bfx_1)}{{\mathfrak s}(\bfx_1)} m_{au}(\bfx_1)(\gamma_1z_1+\gamma_2z_2).
\end{aligned}
$$
Thus, there exist $k_1,k_2,a_1$ and $a_2>0$ such that
$$
\begin{aligned}
|\epsilon_1(t)| \le a_1 e^{-k_1t}, \quad
|\epsilon_2(t)| \le a_2 e^{-k_2t}.
\end{aligned}
$$

Invoking the definition of $\calh_x$ and $\calh_x \ge 0$, as well as the equality \eqref{dot_Hx}, we have
$$
\begin{aligned}
\dot \calh_x & ~\le~ |\epsilon_1|\calh_x + |\epsilon_2|\sqrt{\calh_x}
\\
& ~\le ~ a_1 e^{-k_1t} \calh_x + a_2 e^{-k_2t}\sqrt{\calh_x}.
\end{aligned}
$$

Now, we construct the auxiliary system
\begequ
\label{dot_r}
\dot r = a_1 e^{-k_1t} r + a_2 e^{-k_2t}\sqrt{r},
\endequ
with initial condition $r(0) >0$. According the comparison lemma \cite{KHA}, if the system state $r$ in \eqref{dot_r} is essentially bounded, then the Hamiltonian function $\calh_x(x)$ would be bounded as $t\to \infty$. Solving the ordinary differential equation \eqref{dot_r}, we obtain the solution
$$
r(t) = \left(
{ \displaystyle{ \int_0^t} \hal a_2 \exp\left(\hal {a_1\over k_1} e^{-k_1s}\right) e^{-k_2s} ds + C_1
\over
\exp\left( \hal {a_1\over k_1} e^{-k_1 t} \right)
}
\right)^2
$$
with some constant $C_1$, for which the followings hold
$$
\begin{aligned}
\lim_{t\to\infty}\exp\left( \hal {a_1\over k_1} e^{-k_1 t} \right)
& = 1
\\
 \limsup_{t\to\infty}   \displaystyle{ \int_0^t} \hal a_2 \exp\left(\hal {a_1\over k_1} e^{-k_1s}\right) e^{-k_2s} ds &< +\infty.
\end{aligned}
$$
Indeed, the second limit follows by
$$
\exp\left(\hal {a_1\over k_1} e^{-k_1s}\right) \le \exp\left(\hal {a_1\over k_1}\right) : = a_3, \quad \forall s\ge 0
$$
due to $k_1>0$ and 
$$
\begin{aligned}
  \left|\displaystyle{ \int_0^\infty} \hal a_2 \exp\left(\hal {a_1\over k_1} e^{-k_1s}\right) e^{-k_2s} ds\right|
 \le & 
 \left|\displaystyle{ \int_0^\infty} \hal a_2 a_3 e^{-k_2s} ds\right|
 \\
 \le &
 a_2a_3 \int_0^\infty e^{-k_2 s} ds
 \\
 < & + \infty,
\end{aligned}
$$
due to $k_2>0$. Therefore, we have that $r \in \call_\infty$. As a consequence, $\calh_x$ and $\bfx_3 $ both are bounded. On the other hand, since the variable $z_2 = \bfx_4 - \calk'(\bfx_1)\bfx_3 $ is bounded, the state $\bfx_4$ is also bounded. We complete the proof.
\end{proof}

\begin{remark}\rm
\label{remark:minima} 
{From {\bf A3(c)}}, we assume that there is a \emph{single} isolated minimum point of the function $U$, in order to simplify the analysis. However, it can be extended to the case with multiple isolated minima, because the topological property of Lyapunov functions also holds true for local asymptotic stability in the domain of attraction, see \cite[Sec. 2]{WIL} for example. For this case with a given $c$, the level set of $\calh(\xi)$ contains several disconnected closed orbits, and we still achieve orbital stabilization with the proposed controller. The orbit, which the trajectory ultimately converges to, depends on the initial condition $\bfx(0)$.
\end{remark}
\begin{remark} \rm
Proposition \ref{pro2} does not claim that $\bfx$
converges to a particular periodic orbit, but only to one of the all possible periodic orbits generated by the target dynamics, whose existence has been assured in the proof of {\bf Y1}. We also underline that the identified conditions in this section, as well as the utilization of collocated pre-feedback (to obtain Spong's Normal Form), makes the main result not coordinate invariant. 
\end{remark}

\section{Discussions}
\label{sec:discussion}

In this section, we give some discussions about the proposed method.
\vspace{.1cm}

{\bf D1)} In this note, we address underactuated mechanical systems that after a pre-feedback do not preserve the EL structure, in contrast with  \cite{Ortetal20} and \cite{ROMetaltcst}, where the EL structure is preserved. This fact is key to prove boundedness of the whole system because the resulting model does not have gyroscopic terms depending on the actuated velocity; see \cite[Eq.(4)]{ROMetaltcst}. Besides, a similar model is studied in \cite{ROMetalejc}, which, however, is concerned with the point regulation. Clearly, the boundedness condition {\bf Y4} becomes more challenging in orbital stabilization than point regulation in \cite{ROMetalejc}. 
\vspace{.1cm}

{\bf D2)} The extension of this work to mechanical systems with $n$-DOF's verifying the form of the inertia matrix \eqref{matM}---with appropriated dimensions---and underactuation degree one is straightforward, {\em i.e.}
 \begin{equation*}
 M (q)= \left[ \begarr{cc} m_{uu} (q_u) & m_{au}^\top (q_u) \\ m_{au} (q_u) & m_{aa}  (q_u) \endarr \right],
 \end{equation*}
with $q_u \in \mathbb{S}$ and $q_a \in \rea^{n-1}$. Note that, after of prefeedback the unactuated dynamics of \eqref{newelsys} remains unchanged, thus the proposed design in Section IV holds for that class of systems with $n$-DOF's. Thus, systems like the {\it cart with the double pendulum} and {\it rotary double pendulum} with the second pendulum underactuated can be tackled \cite{BROLI}. The interested reader may refer to \cite{ROMetaltcst} for a comprehensive understanding.
\vspace{.1cm}

{\bf D3)} Clearly, the definition of mapping ${\calk}(\bfx_1)$ has the main role in the verification of the immersion condition {\bf Y2}; and ${\mathfrak s}(\bfx_1)$ is an adhered degree of freedom for that condition. The mapping $m(\bfx_1)$ is the new inertial matrix for the undamping mechanical system chosen as the target dynamics \eqref{tsys}, with $\beta(\bfx_1)$ an auxiliary function. 
\vspace{.1cm}

{\bf D4)} Let us briefly summarise the difference and similarity between the proposed scheme and the VHC approach \cite{MAGCON}.

\begin{itemize}
    
    \item[1)] These two approaches use different assumptions to guarantee the target/reduced dynamics admit periodic behaviors. In VHC, conditions for the reduced dynamics to be Lagrangian or to have periodic solutions are well-known in literature, e.g., some symmetry- and odd function-type conditions are used \cite{MOHetal}; the current work relies on the geometric structure of level surfaces of Lyapunov function \cite{WIL}.

    \item[2)] Lastly but most importantly, the boundedness of full states in closed loop has been carefully addressed in the I\&I framework. The key condition to prove this is the assumption on state space, i.e., $q \in \mathbb{S} \times \rea$. Though any trajectory of the \emph{zero} dynamics on the constraint manifold in VHC is bounded, in the transient stage (when the state converges to the constraint manifold) there is an asymptotically decaying term $\epsilon_t$ appears in the \emph{internal} dynamics, which can be viewed as an ``undamped'' Hamiltonian equation perturbed by the disturbance $\epsilon_t$. It may happen that the states are driven unbounded by $\epsilon_t$ as $t\to\infty$. In the following, we give an example to illustrate this point.
\end{itemize}

\begin{example}
Consider a two-dimensional port-Hamiltonian system
\begequ
\label{pH:example}
\dot \xi = \begmat{0 & 1 \\ -1 & 0 } \nabla \calh  + \et
\endequ
with the Hamiltonian
$$
\quad \calh(\xi) = \hal \ln(\xi_1^2 +1 ) + \hal \xi_2^2,
$$
which is \emph{conservative} when the input $\et = 0$. We draw in Fig. \ref{fig:target1} one of its solutions from $[-1, 1]^\top$, behaving as oscillation. It may be roughly seen as the ``zero dynamics'' on the constrained manifold. Now, let us consider an exponentially decaying term with $\et = [\exp(-{1\over 5}t), -2 \exp(-{1\over 5}t)]^\top$. For this case, the system \eqref{pH:example} may be viewed as the ``internal dynamics'' during the transient stage on approach to the manifold. We are able to find unbounded solution from the same initial conditions; see Fig. \ref{fig:target2}. \qed
\end{example}
\begin{figure}[htp!]
\centering
\subcaptionbox{\label{fig:target1}
Conservative Hamiltonian system with $\et=0$ (zero dynamics)}
{\includegraphics[width=0.23\textwidth]{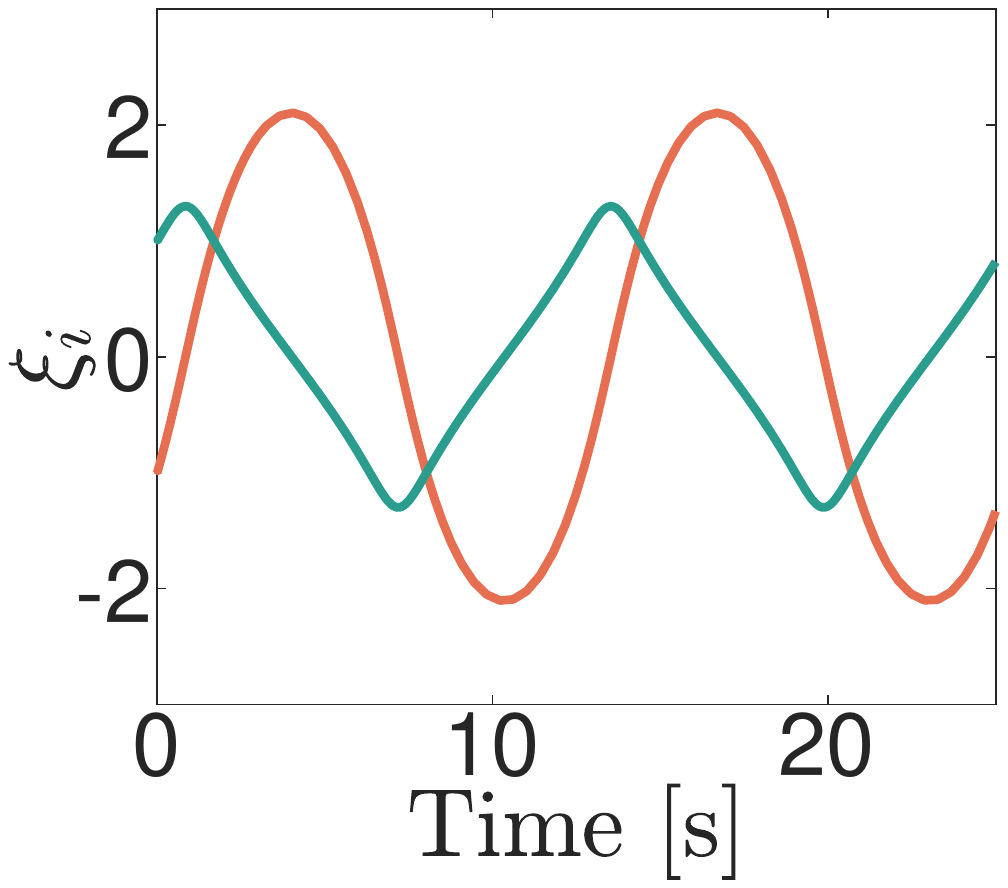}}
~
\subcaptionbox{\label{fig:target2}
Perturbed by an exponentially decaying term (internal dynamics)}
{\includegraphics[width=0.23\textwidth]{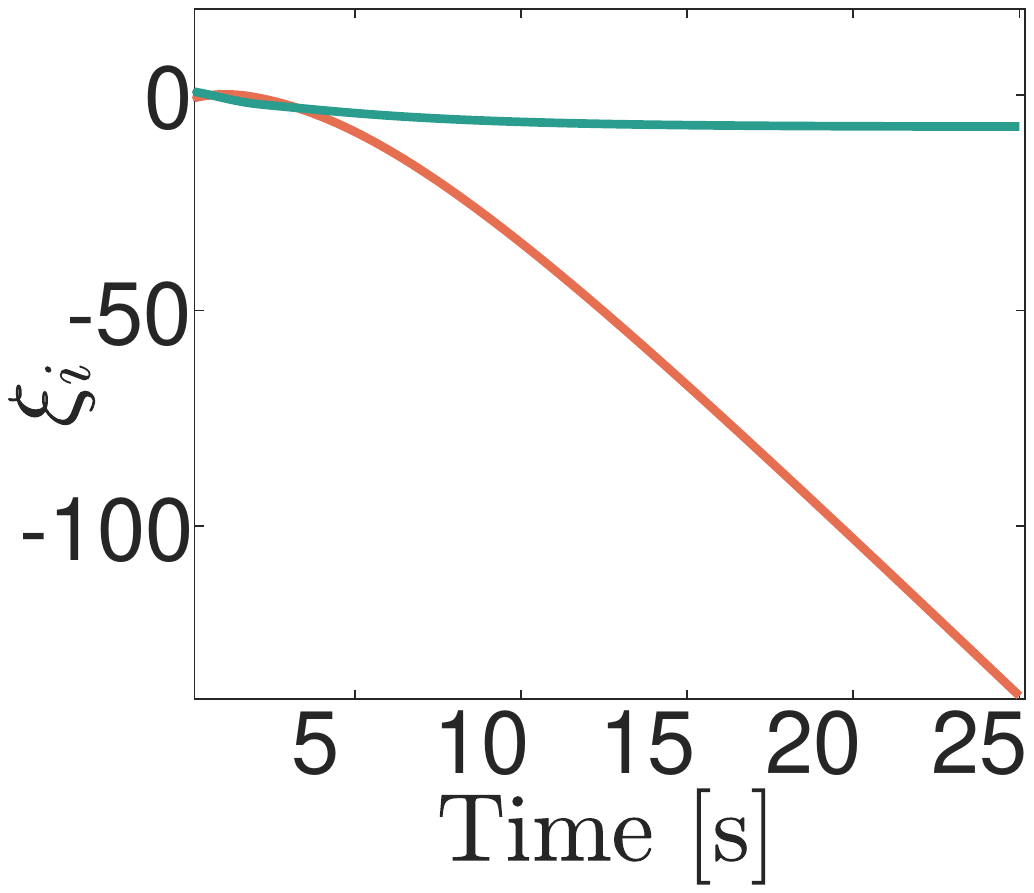}}
\end{figure}

\section{Illustrative examples}
\label{sec5}

In this section we valid our approach through simulations of two well-known models:  The Furuta pendulum and the Pendubot system, which are appreciated in Fig. \ref{fig1} on the left and the right hand side respectively. 

 \begin{figure}[htp]
 \centering
  {
    \includegraphics[width=0.2\textwidth]{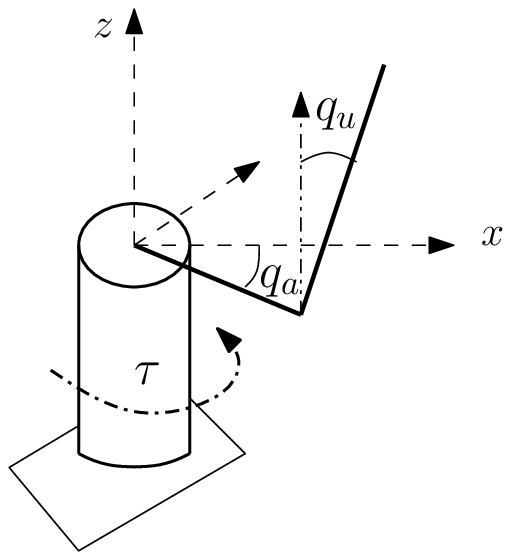}}
    \quad
 {
    \includegraphics[width=0.2\textwidth]{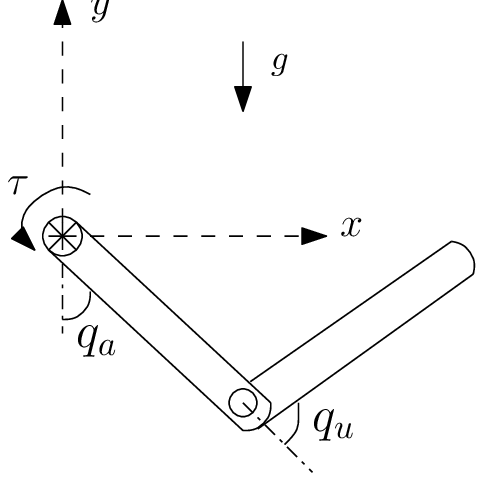}}
 \caption{Furuta's pendulum  and Pendubot system}
    \label{fig1}
\end{figure}

\subsection{Furuta pendulum}

The Furuta pendulum is a 2-DOF underactuated system, which is composed of an arm shaft (corresponding to the angle $q_a$ referred to the $x$-axis) that is subjected to an input control and a unactuated pendulum shaft (angle $q_u$ referred to the $z$-axis). See  the left hand side of Fig. \ref{fig1}. 
  The inertia matrix of the system is given by 
$$
M(q_u)= {\mathcal J} \left[ \begin{array}{cc} 
1 & a_1 \cos(q_u) \\ a_1 \cos(q_u) & a_2 + \sin^2(q_u)
\end{array}\right],
$$
with Coriolis matrix 
$$
C(q_u, \dot q)= {\mathcal J} \left[ \begin{array}{cc} 
0 & -\dot q_a \sin(q_u)\cos(q_u) \\ \begin{array}{c} -a_1 \dot q_u\sin(q_u) +\\
\dot q_a \sin(q_u) \cos(q_u) \end{array}&  \dot q_u \sin(q_u) \cos{q_u}
\end{array}\right],
$$
potential energy $V(q_u)= a_3 \cos(q_u)$, with constants $a_1=\frac{mrl}{{\mathcal J}}$, $a_2= \frac{{\mathcal J}_a +mr^2}{{\mathcal J}}$, $a_3=mgl$. Here,  $m$ is the mass of the pendulum, $r$ is the length of the arm, $2l$ is the total length of the pendulum, ${\mathcal  J}$ is the moment of inertia of the pendulum and ${\mathcal J}_a$ is the moment of inertia of the arm and the motor. See \cite{Acosta10} for further details of the model and parameters, in particular Section 2.1 therein. 

Now, following Proposition \ref{pro2} we select 
$
{\mathfrak s} =-k_1 
$
with  $k_1$ a free gain. Thus, 
  \begin{eqnarray*}
  {\mathcal K}(\bfx_1)&=& -(1+ k_1) \frac{\ln \left( \sec(\bfx_1) +\tan(\bfx_1)  \right) }{a_1} \\
  {\mathcal K}' (\bfx_1)&=& -\frac{1+k_1}{a_1 \cos(\bfx_1)}, \quad  
  {\mathcal K}'' (\bfx_1)= -\frac{(1+k_1) \sin (\bfx_1)}{a_1 \cos^2 (\bfx_1)}. 
  \end{eqnarray*}

Consequently, the functions $\rho(x_1)$ and $\beta(x_1)$ given by \eqref{rho} and \eqref{bet} respectively, take the form
\begin{eqnarray*}
\rho(\bfx_1)&=& -\frac{a_3}{{\mathcal J}} \sin(\bfx_1), \quad  
\beta(\bfx_1)= - \kappa_1 \tan(\bfx_1)
\end{eqnarray*}
with $\kappa_1=\frac{(1+k_1)(1+k_1+ a_1^2)}{ a_1^2 k_1}$. Now, using these functions and after some straightforward calculations, the functions \eqref{pote} and \eqref{m} have the form  
\begin{eqnarray}
\label{Ud}
U(\bfx_1)&=& \frac{a_3 }{{\mathcal J} \kappa_2} \left(  \frac{1}{\cos(\bfx_1)^{\kappa_2}}  - 1 \right) \\
m(\bfx_1) &=&  \cos(\bfx_1)^{-\kappa_1}
\end{eqnarray}
with $\kappa_2=\frac{2+4k_1+ 2a_1^2 +2k_1^2+ k_1 a_1^2} {a_1^2}$. 
  
Since the desired objective is to oscillate the pendulum in the upper half plane, we impose  $k_1>0$,  so that \eqref{Ud}  is positive for $\bfx_1 \in (-\frac{\pi}{2}, \frac{\pi}{2})$ and has a minimum at zero with $\bfx_1=0$, that corresponds to the upright equilibrium point of the pendulum. 

Now, we present several simulations to validate the performance of the controller proposed. For this, we use the parameters of \cite{Acosta10} which are $m=0.0679 $Kg, $l=0.14$m, $r=0.235$ and ${\mathcal J}=0.0012 $Kgm$^2$.

 The simulations were carried out to assess the impact on the transient performance of the proposed controller using the free gain  $k_1$ and   the tuning gains $\gamma_1$ and $\gamma_2$ of the $z$-dynamics. We use  ${\bfx}(0)=[\frac{\pi}{9} (\text{rad}), 0.6 (\text{rad}), 0, 0]$ as initial condition for these scenarios. In addition, we show the effects produced by different initial conditions. 
 
 In Figs. \ref{fig:gamsx1X3} and \ref{fig:gamsx2X4} {we appreciate  that the oscillation amplitude in both links depends of the selection of  $\gamma_1$ and $\gamma_2$ (setting $k_1=5$). In particular, we have that  the amplitudes and velocities decreasing when  $\gamma_1$ increases and $\gamma_2$ decreases}. In all cases the oscillation of the unactuated link is around of the upright position. On the other hand,  from Fig. \ref{fig:k1sx1X3} we notice that preserving  $\gamma_{1,2}=5$  and increasing $k_1$,  the unactuated link  oscillates with the same amplitude--whose value is given by the initial condition---while its velocity decreases. Regarding the actuated link, both---oscillation amplitude and velocities--are modified. These behaviors are shown in Fig \ref{fig:k1sx2X4}. Finally, for different initial conditions, $\gamma_{1}, \gamma_2$ as above and setting $k_1=9$, the  amplitude of oscillation of the unactuated link is also determined by the initial condition, while for the actuated one is not. It has sense that the velocities increase as initial positions increase. This is because  a greater torque must be applied to generate the oscillations of the unactuated link on the upright position; see Figs. \ref{fig:condx1X3} and \ref{fig:condx2x4}.\footnote{An animation of the system behaviors may be found at \href{https://youtu.be/LlPHtccgcYQ}{\tt \blue{youtu.be/LlPHtccgcYQ}}.}

\begin{figure*}[htp]
\centering
\subcaptionbox{\label{fig:gamsx1X3}
Trajectories of $(\bfx_1,\bfx_3)$ with
different gains $\gamma_1$ and $\gamma_2$ and the same initial conditions}
{\includegraphics[width=0.32\textwidth, height = 4cm]{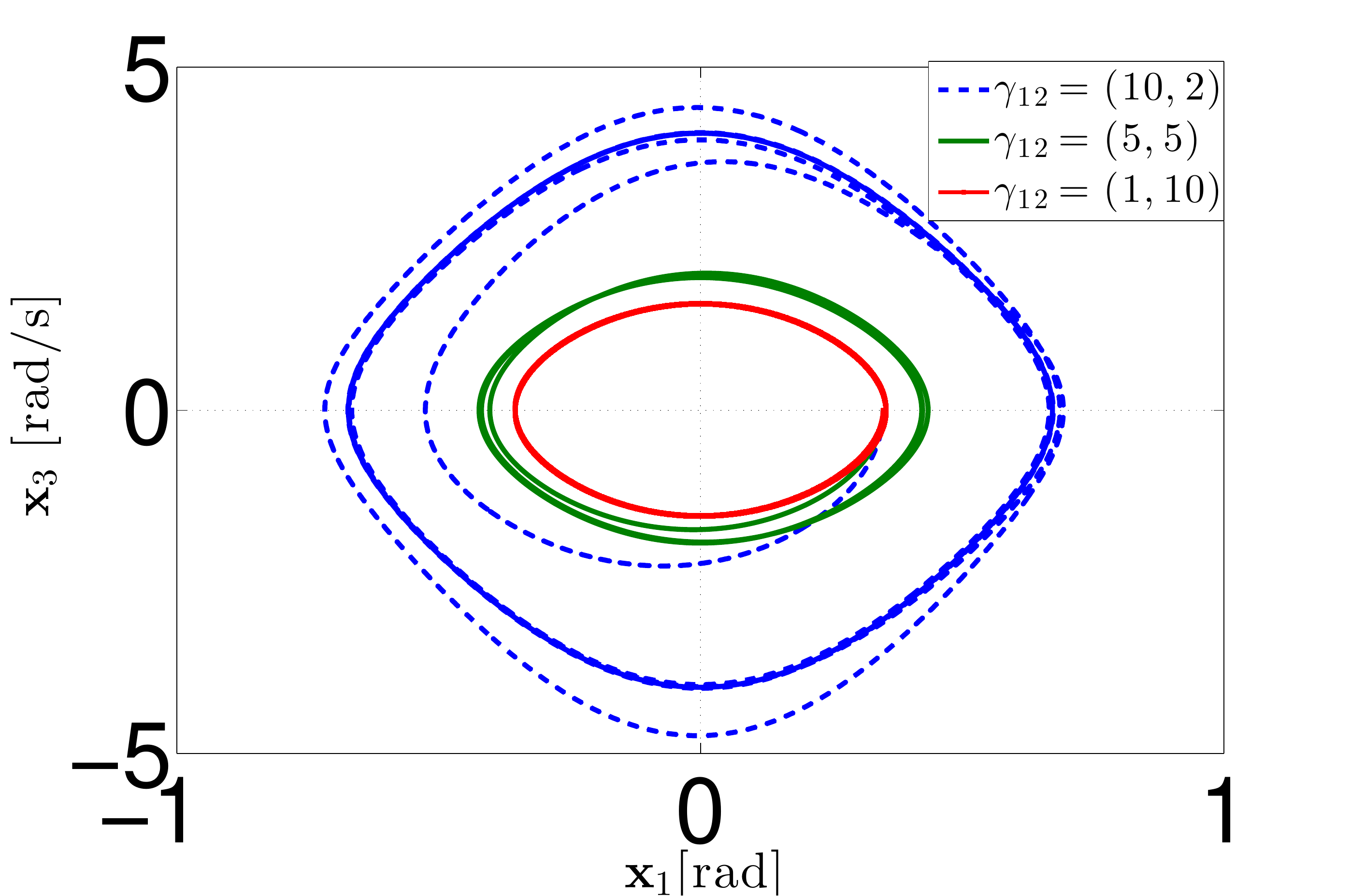}}
~
\subcaptionbox{\label{fig:gamsx2X4}
Trajectories of $(\bfx_2,\bfx_4)$ with
different gains $\gamma_1$ and $\gamma_2$ and the same initial conditions }{\includegraphics[width=0.32\textwidth, height = 4cm]{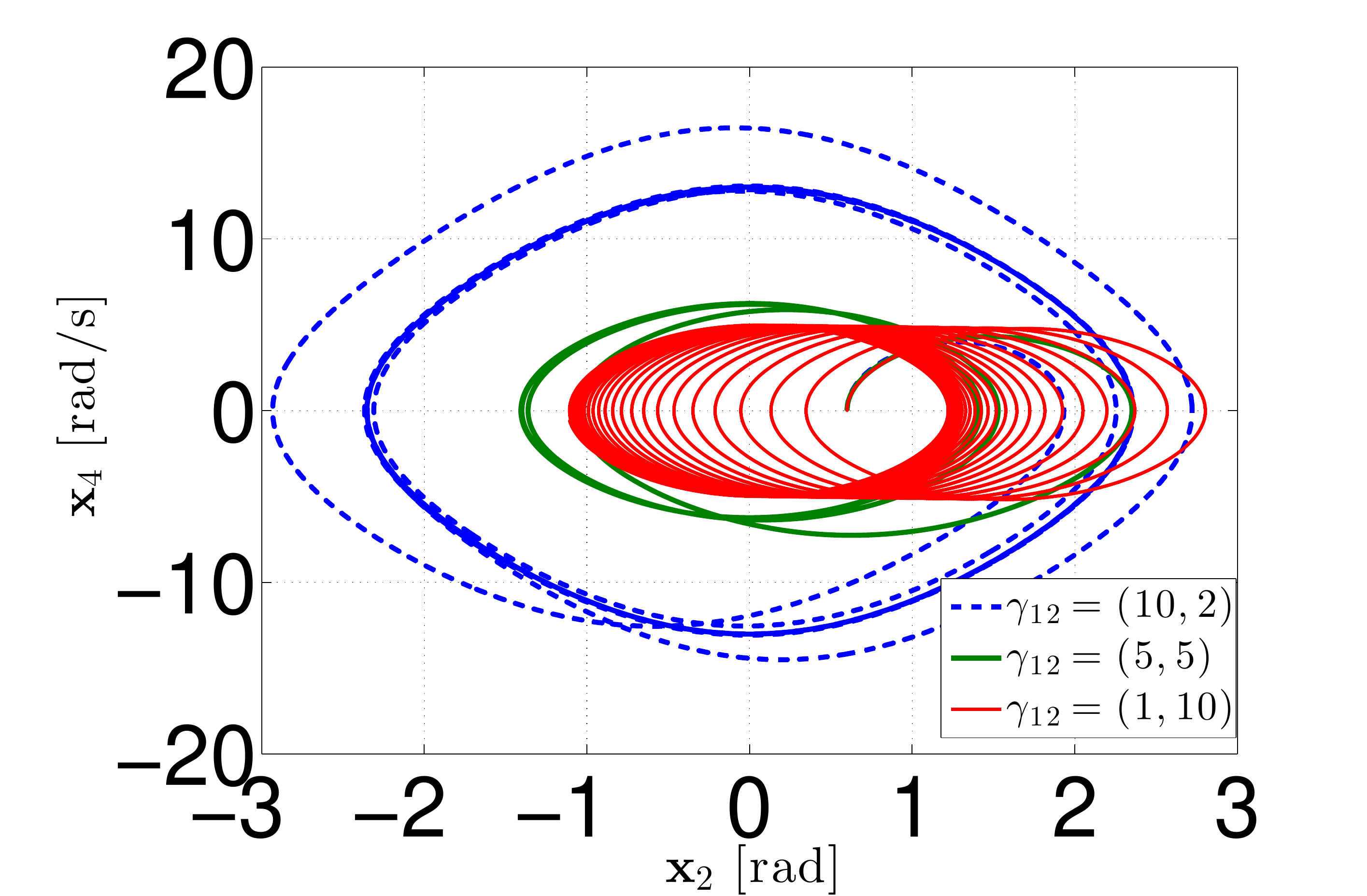}}
~
\subcaptionbox{\label{fig:k1sx1X3}
Trajectories of $(\bfx_1,\bfx_3)$ with
different gains $\gamma_1$ and $\gamma_2$ and the same initial conditions
 }{\includegraphics[width=0.32\textwidth, height = 4cm]{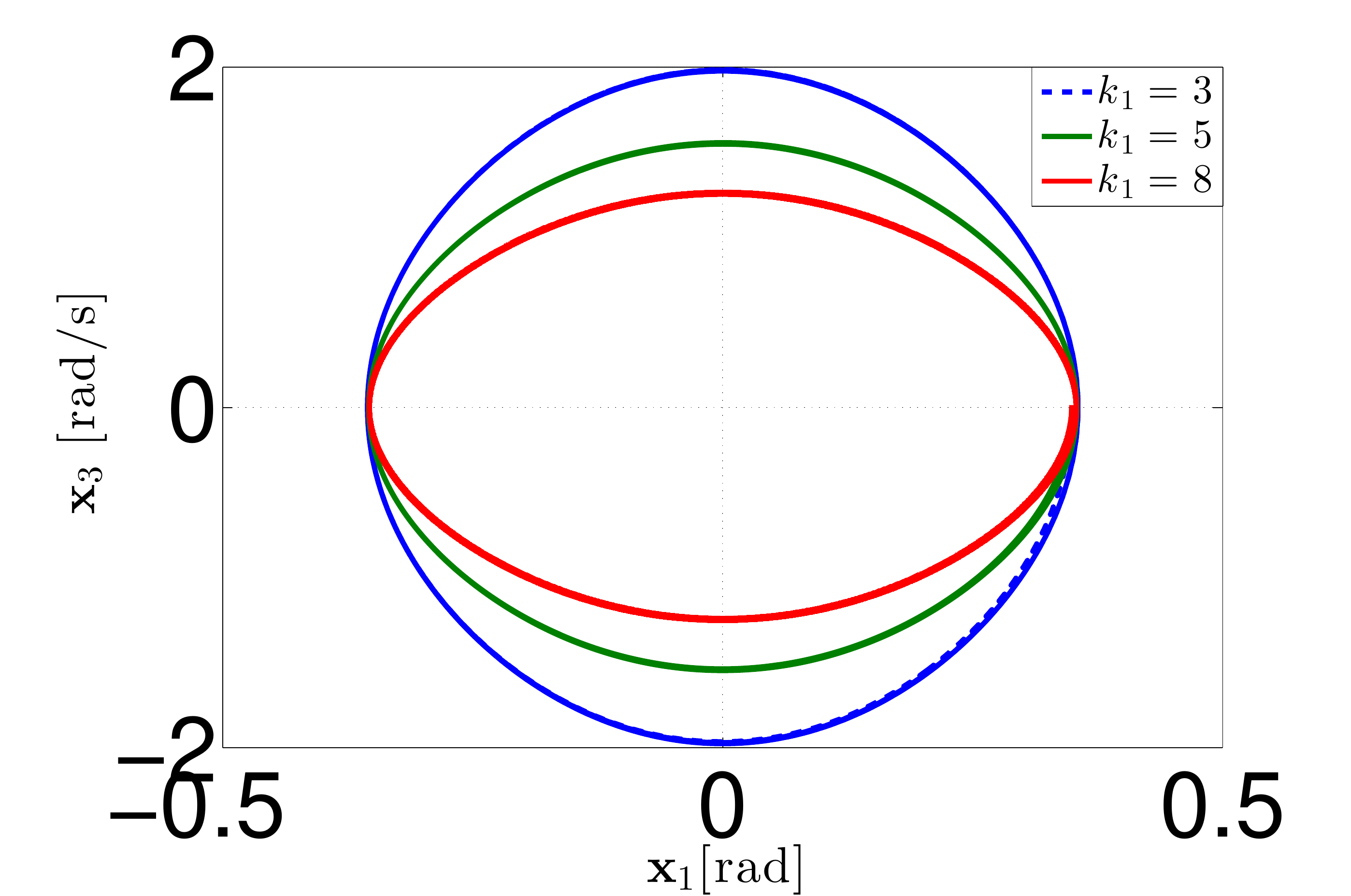}}
 \\
\subcaptionbox{\label{fig:k1sx2X4}
Trajectories of $(\bfx_2,\bfx_4)$ with
different gain $k_1$ and the same initial conditions
 }{\includegraphics[width=0.32\textwidth, height = 4cm]{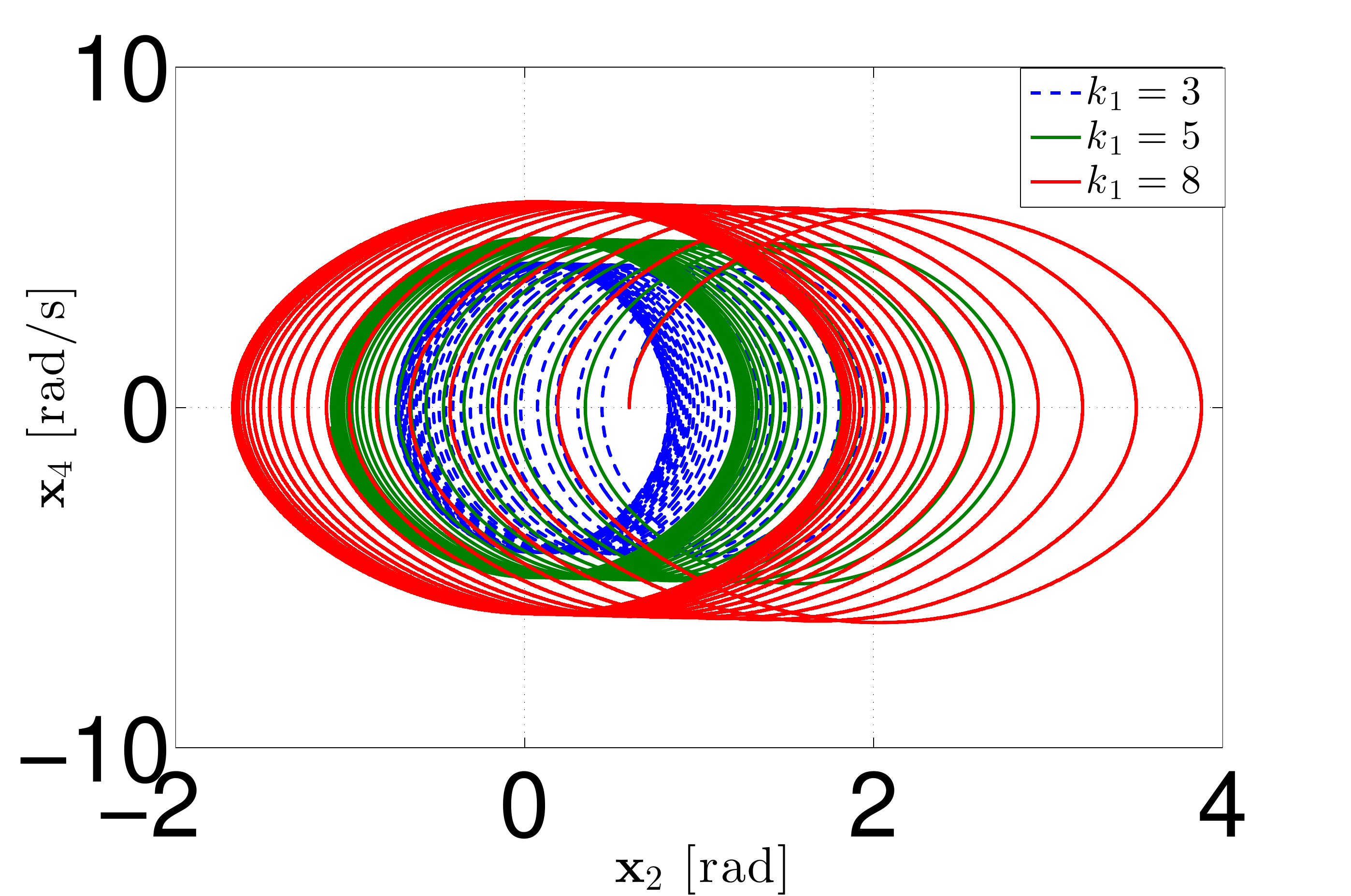}}
~
\subcaptionbox{\label{fig:condx1X3}
Trajectories of $(\bfx_1, \bfx_3)$ with
different initial conditions
 }{\includegraphics[width=0.32\textwidth, height = 4cm]{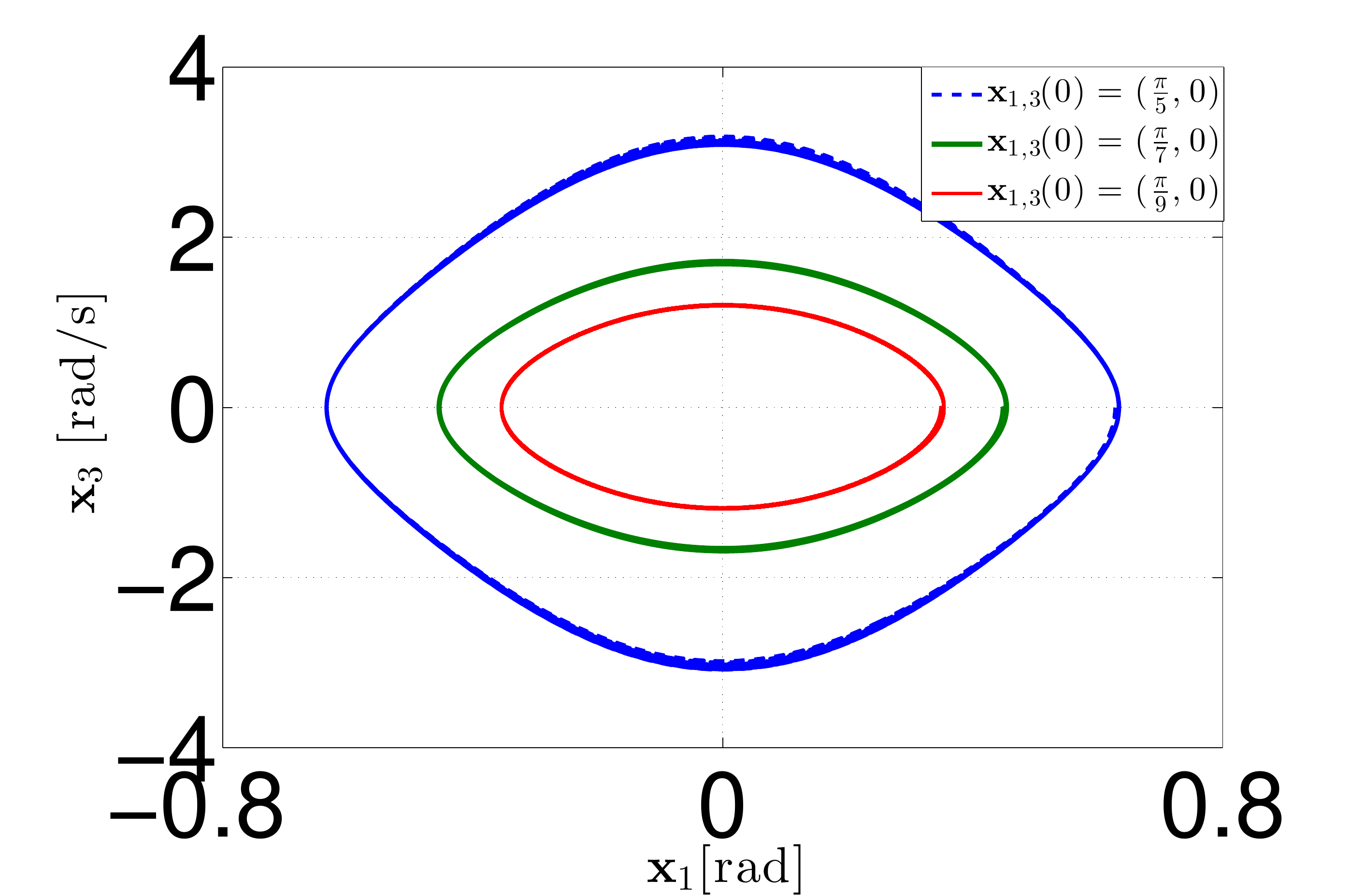}}
 ~
\subcaptionbox{\label{fig:condx2x4}
Trajectories of $(\bfx_2, \bfx_4)$ with
different initial conditions }{\includegraphics[width=0.32\textwidth, height = 4cm]{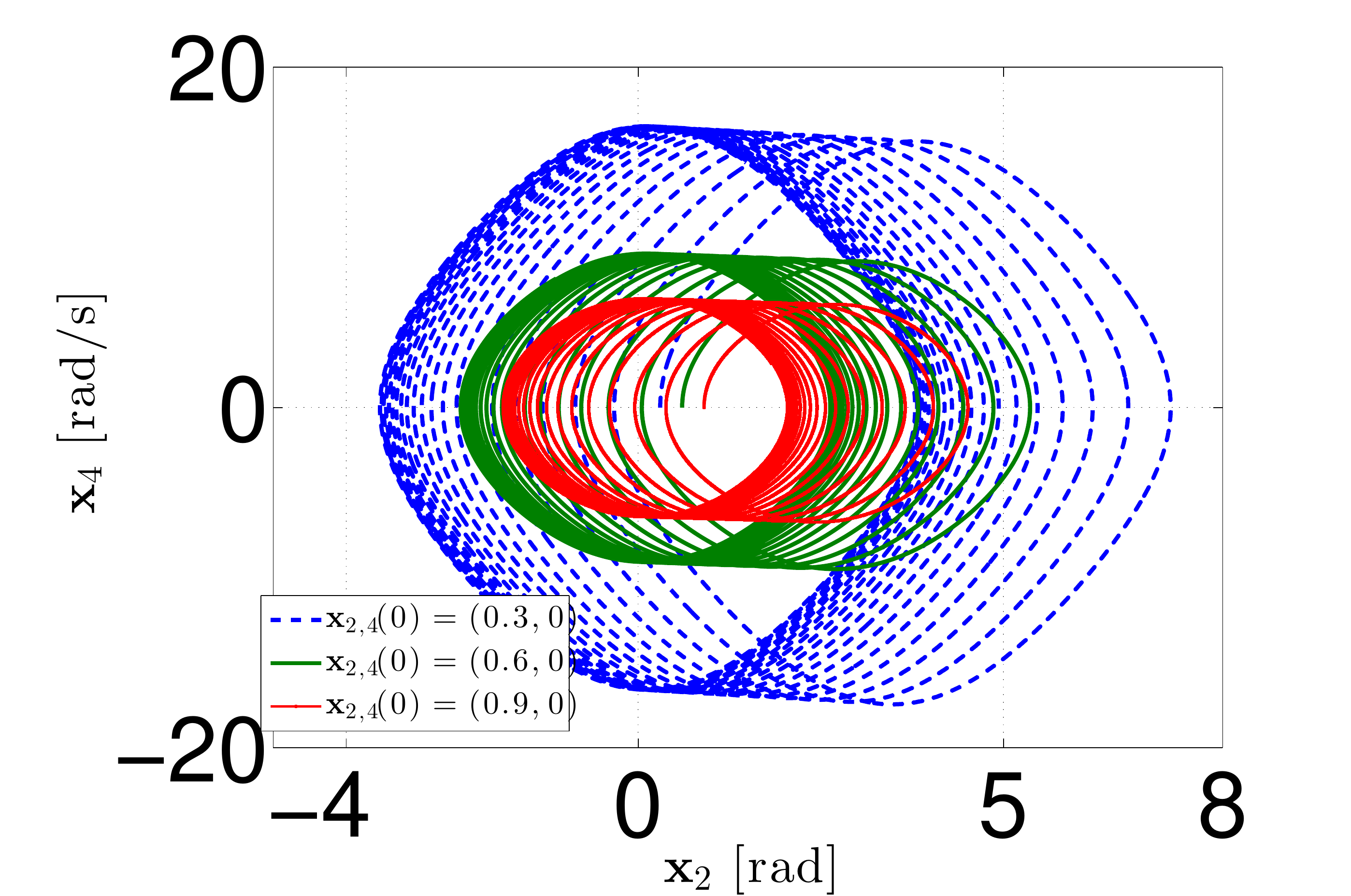}}
\caption{Simulation results for the model of Furuta pendulum}
\end{figure*}

%
%
%
%
%
%
%
%
%
%

\subsection{Pendubot system}

The Pendubot is an underactuated mechanical system consisting of two links which can move freely on a vertical plane through a pair of revolute joints. The first one has an actuator that applies a torque on it, while the  second joint is passive. A schematic picture of the Pendubot is shown on the right hand side of Fig. \ref{fig1}, where $q_a$ and  $q_u$ are the actuated and unactuated coordinates, respectively.

The inertia matrix $M(q_u)$, Coriolis matrix $C(q_u,q_u)$; and potential energy $V(q)$ are described as follows
\begin{eqnarray*}
M(q_u)&=& \left[ \begin{array}{cc} 
 c_2 & c_2+c_3 \cos(q_u) \\  c_2+c_3 \cos(q_u) & c_1+c_2+2c_3 \cos(q_u) 
\end{array}\right] \\
C(q, \dot q)&=& c_3 \sin(q_u) \left[ \begin{array}{cc} 
0 &  \dot q_a \\
 -\dot{q}_u -\dot{q}_a  &- \dot{q}_u 
\end{array}\right]
\end{eqnarray*} 
and 
$$
V(q)=-c_4 g \cos(q_a) -c_5 g \cos(q_a +q_u).
$$
The parameters $c_i$ with $i=1,\ldots,5$ are defined as $c_1=m_1 l_{c1}^2 +m_2 l_1^2 +I_1$, $c_2=m_2 l_{c2}^2 +I_2$, $c_3=m_2 l_1 l_{c2}$, $c_4= m_1 l_{c1} +m_2 l_1 $ and $c_5= m_2 l_{c2}$,  where $l_j$ is the length of the links, $l_{cj}$ is the length at the center of mass and $m_j$   is the mass of the links with $j=1,2$. For more details of the model, see \cite{Toan17} \footnote{Compared with the classical model representation of the Pendubot ---where the actuated link is the first one---we have rewritten the Inertia and Coriolis matrices to match them with the notation of the paper; see \eqref{matM}. }.

For this system, the control objective is to generate stable oscillations of the underactuated link  starting above the horizontal. Hence, following Proposition \ref{pro2},  we choose  
${\mathfrak s} (\bfx_1)=-\kappa_2 [c_2 +c_3 \cos(\bfx_1)]+c_2$
with $k_2$ a free parameter to be selected. As consequence,  we have that  ${\mathcal K}=-k_2 \bfx_1$, ${\mathcal K}'=-k_2$ and ${\mathcal K}''=0$. Using these mappings, the functions $\rho(x_1)$ and $\beta(x_1)$ are given by 
\begin{eqnarray}
\label{rhopen}
\rho(\bfx_1)&=& -\frac{c_5 g \sin( (1-k_2) \bfx_1)}{k_2 [c_2 +c_3 \cos(\bfx_1)]-c_2} \\
\beta(\bfx_1)&=& - \frac{c_3 k_2^2 \sin(\bfx_1)}{k_2 [c_2 +c_3 \cos(\bfx_1)]-c_2}.
\label{betapen}
\end{eqnarray}
We note that the motion of the unactuated link is relative to the actuated  one, thus, to ensure that the oscillations are generated on the upper plane, we select $k_2$ so that the new potential energy has a minimum around of 
\begin{equation}
\label{Udpen}
q_u=\pm n \pi, \quad  n>1, \, \,n \in \nea,
\end{equation}
which forces to the actuated link to be in an upright position. To this end, using \eqref{rhopen},  \eqref{betapen}, setting $k_2=-1$ and making simple calculations, the new potential energy \eqref{pote}  and function \eqref{m} take  the form
\begin{eqnarray}
U(\bfx_1)&=& \frac{2c_5 g (c_2 +c_3 \cos(\bfx_1))}{ c_3^2\, (4 c_2^2 +4 c_2 c_3 \cos(\bfx_1) + c_3^2 \cos(\bfx_1)^2 )} \label{Upend} \\
m(\bfx_1)&=&\frac{1}{(2c_2+c_3 \cos(\bfx_1))^2}.
\label{mpend}
\end{eqnarray}

To corroborate the effectiveness of our approach, simulation results are presented using  the parameters given in \cite{Toan17}, which are   $l_1=0.2 $m, $l_2=0. 28 $m, $m_1=0.2 $Kg, $m_2=0.052 $Kg, $l_{c1}=0.13 $m, $l_{c2}= 0.15 $m, $I_1=3.38\times 10^{-1}$Kgm$^2$ and $I_2=1.17\times 10^{-3} $Kgm$^2$.

As above mentioned, the second link is unactuated and its motion depends of the first one. Consequently, the oscillations of both links are ``synchronized''. Since our orbital controller with \eqref{Upend} and \eqref {mpend} ensures oscillations on the upper plane, Figs. \ref{pendu:gamsx1x3} and \ref{pendu:gamsx2x4} show the performance of both links under different $\gamma_{1,2}$ values and using as initial conditions $\bfx (0)= [\frac{\pi}{3}, \frac{\pi}{1.5}, 0, 0]$, thus we appreciate that the oscillating frequency  is increasing when $\gamma_1$ increases and $\gamma_2$ decreases. Now,  we set $\gamma_{1,2}=(10,5)$ and propose different initial conditions verifying  $\bfx_1(0)+\bfx_2(0) \geq \pi$. This condition allows us to have the unactuated pendulum  close to the upright position independently, that the actuated link is  located on the lower or upper plane. The transitory behavior of both links are appreciated in Figs. \ref{pendu:condx1x3} and \ref{pendu:condx2x4} and, as expected, the oscillations are generated on the upper plane around of $\bfx_1=\pi$.  This contrasts radically with \cite{FREetal}, where the oscillations  for 1) both links in the down position [the lower plane] and 2)  the upright position  for the actuated coordinate  and down position for unactuated one,  are distinguished  satisfying some {\it numerical} quantities in each case. An animation of the system behaviors is also available at the same link as the one in the first example. An animation of the system behaviors is also available at the same link as the one in the first example.

\begin{figure*}[htp]
\centering
\subcaptionbox{\label{pendu:gamsx1x3}
Trajectories of $(\bfx_1,\bfx_3)$ with
different gains $\gamma_1,\gamma_2$ and the same initial conditions}
{\includegraphics[width=0.245\textwidth]{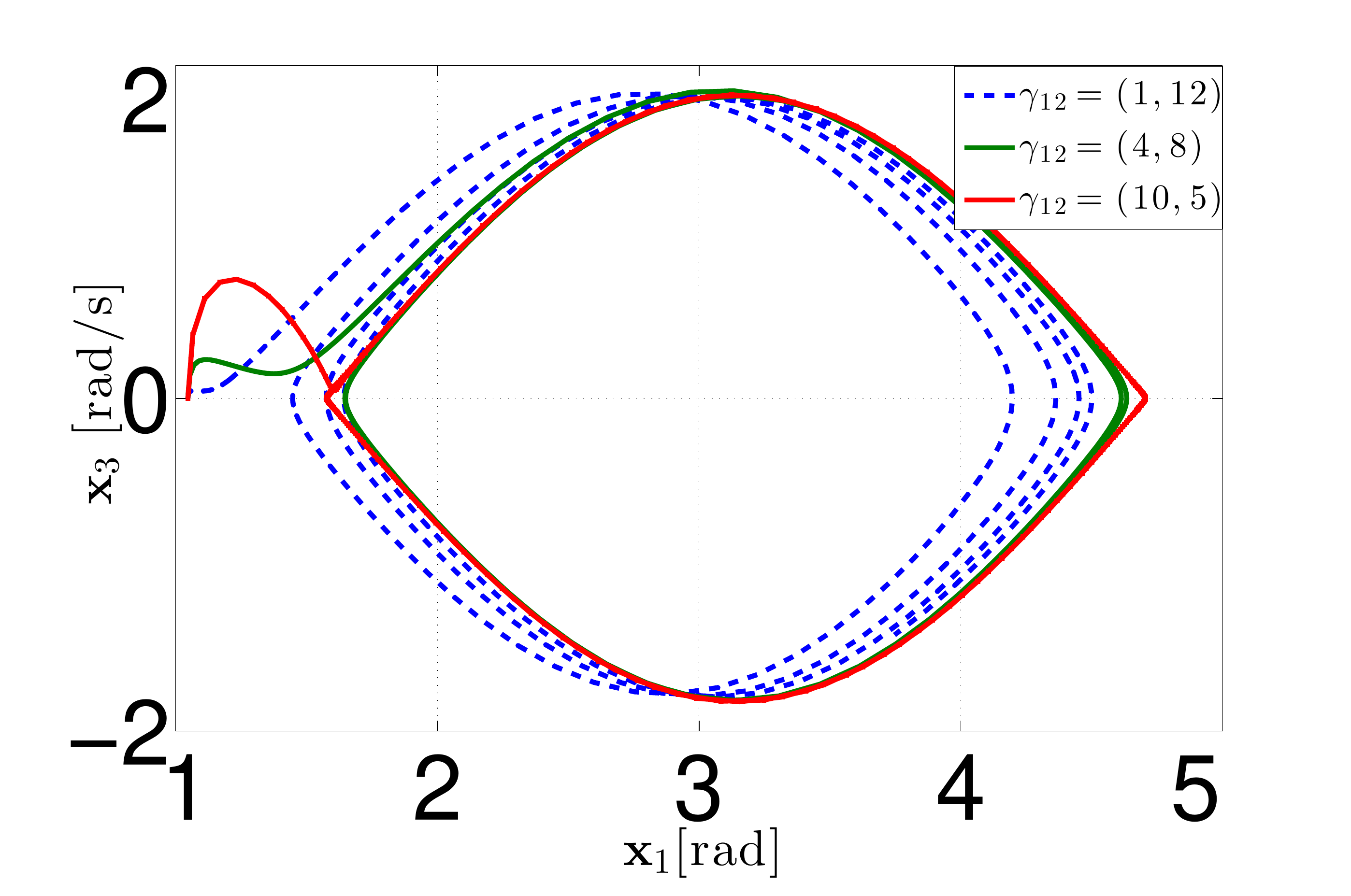}}
\subcaptionbox{\label{pendu:gamsx2x4}Trajectories of $(\bfx_2 ,\bfx_4)$ with
different gains $\gamma_1,\gamma_2$ and the same initial conditions
}
{\includegraphics[width=0.245\textwidth]{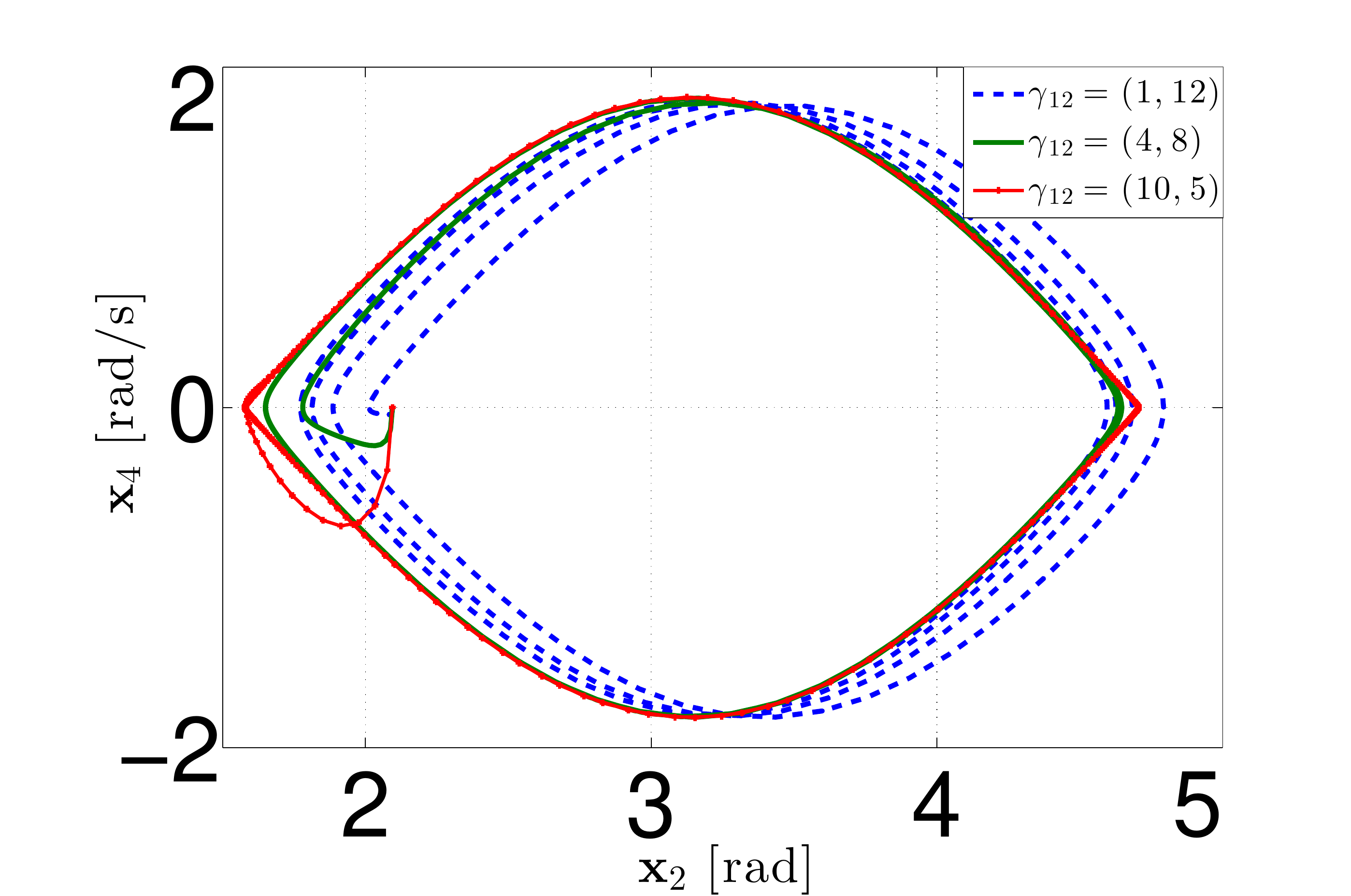}}
\subcaptionbox{\label{pendu:condx1x3}Trajectories of $(\bfx_1,\bfx_3)$ with different initial conditions
}
{\includegraphics[width=0.245\textwidth]{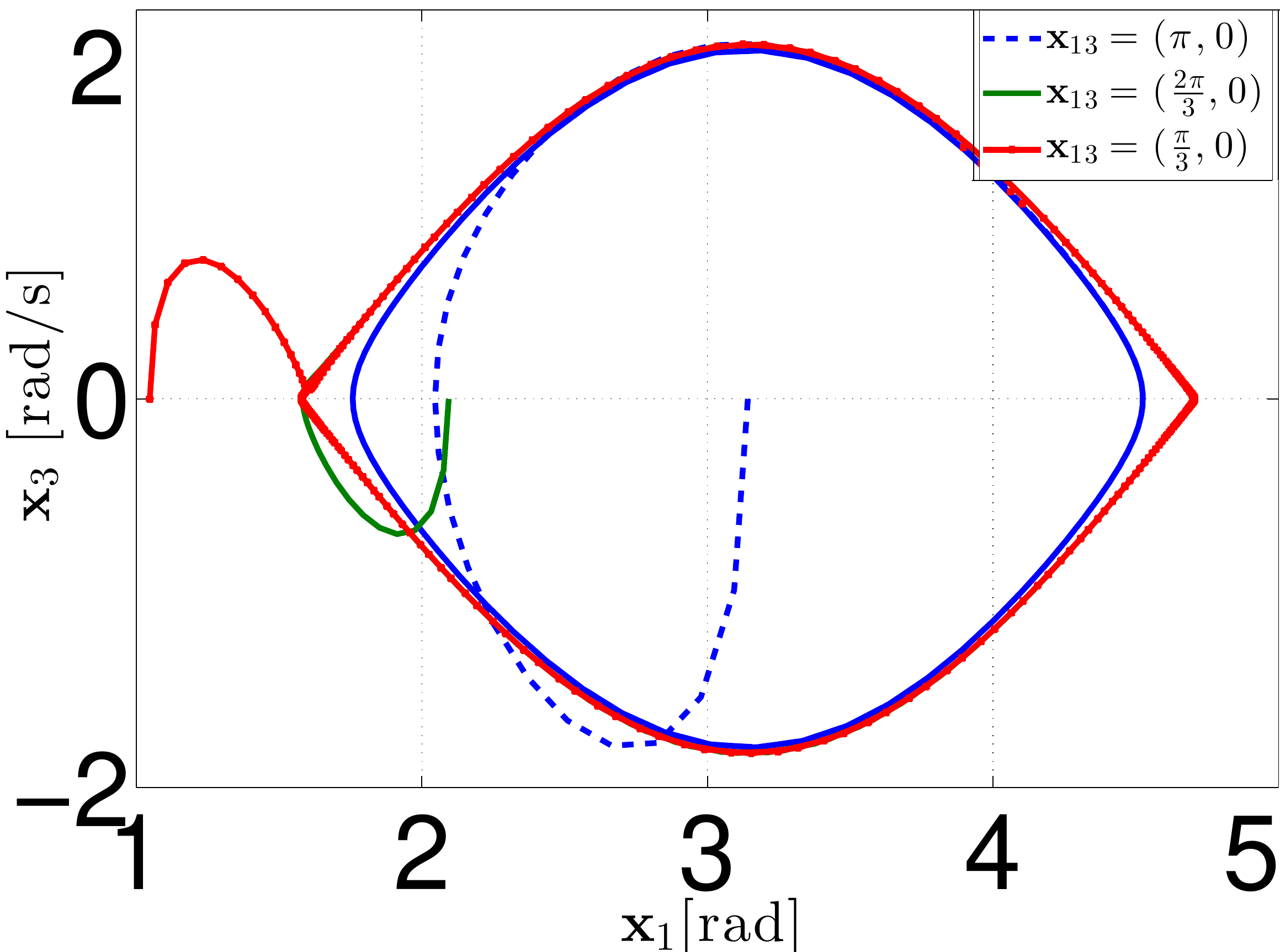}}
\subcaptionbox{\label{pendu:condx2x4}Trajectories of $(\bfx_2,\bfx_4)$ with
different  initial conditions
}
{\includegraphics[width=0.245\textwidth]{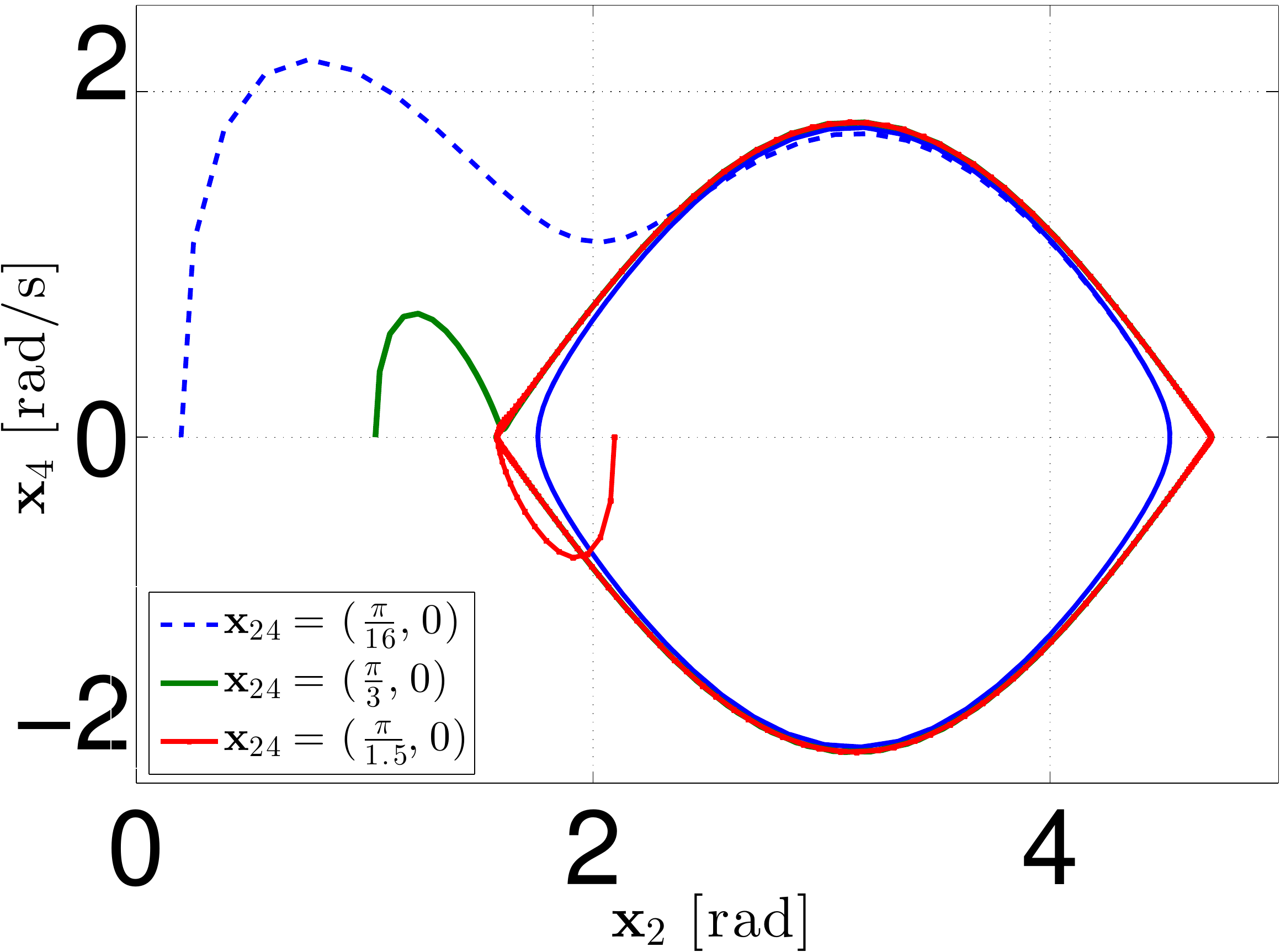}}
\caption{Simulation results for the model of Pendubot}
\end{figure*}


%
%

%
%

\section{Conclusions and Future work}
\label{sec6}

We tailored in this paper the I\&I orbital stabilization approach, which was recently proposed in \cite{Ortetal20}, to a class of underactuated mechanical models. First, we use the widely popular partial feedback linearization \cite{Spong1994} to simplify the sequential design; then, it is combined with an elaborated I\&I controller. The full design is simple and in a compact form, and all the assumptions can be easily verified as a priori rather than being done numerically on-line. In the proof of the main result, we had an encounter with a conservative EL system perturbed by a decaying term, the boundedness of which has been studied carefully and comprehensively. At the end, we apply the approach to two benchmarks, {\em i.e.}, the models of Furuta pendulum and Pendubot, showing good performance, as well as verifying all the theoretical results. Though the approach is given to the 2-DOF systems, it can be seamlessly extended to the arbitrary-DOF case with underactuation one. On the other hand, the extensions to the systems with more than one passive configurations, and to the hybrid models, are interesting topics to be explored in future, which should be practically useful in application to robotics. Moreover, as future work we will consider to avoid the use of the feedback linearization step, since it involves the exact cancellation of some nonlinear terms, which is intrinsically non-robust.

\section*{Acknowledgements}
The authors would like to express their gratitude to four reviewers for their careful reading of our manuscript, and many thoughtful and constructive comments that helped improve its clarity.

\begin{appendix}

\section*{Immersion and Invariance Method }

The immersion and invariance (I$\&$I) technique is a constructive methodology to design nonlinear and adaptive control, and state observers for dynamical systems \cite{IIbook}. In the following proposition we summarize the I\&I methodology for orbital stabilization \cite{Ortetal20}, in which the scheme is proposed for general nonlinear systems.

\begin{proposition}
\label{pro1}\rm\cite{Ortetal20}
Consider the system
\begequ
\label{sys}
\dot{\bfx} = f(\bfx) + g(\bfx)u,
\endequ
with state $\bfx\in\rea^n$, input $u\in\rea^m$ ($m < n$), and $g(\bfx)$ full rank. Assume we can find mappings
\begin{eqnarray}
&\alpha: \rea^p \to \rea^p, \quad \pi: \rea^p \to \rea^n, \quad \phi: \rea^n \to \rea^{n-p}, \\
& \quad v :\rea^n\times\rea^{n-p} \to \rea^m
\end{eqnarray}
with $p < n$, such that, the following conditions hold.
\begin{itemize}
    \item[\bf{Y1.}](Target systems) The dynamical system
    \begequ
    \label{tardyn}
     \dot{\xi} = \alpha(\xi)
    \endequ
with state $\xi \in \rea^p$, has non-trivial periodic solutions $\xi_\star (t)$ which are determined by the initial condition $\xi(0)$ in the set of interests.

    \item[\bf{Y2.}] (Immersion condition) For all $\xi\in\rea^p$, the FBI equation holds
    \begequ
    \label{fbi}
    \begin{aligned}
     g^\perp(\pi(\xi))  \varpi (\xi) & =0
     \\
     \varpi(\xi) & :=  f(\pi(\xi)) - \nabla \pi^\top(\xi) \alpha(\xi).
     \end{aligned}
     \endequ
    %
    \item[\bf{Y3.}] (Implicit manifold) The following set identity holds
    {\small
    \begequ
    \label{impman}
     \calm:=\{\bfx\in\rea^n | \phi(\bfx)=0\} =
     \{ \bfx \in\rea^n  | \bfx=\pi(\xi),\;\xi \in \rea^p\}.
    \endequ
    }
    \item[\bf{Y4.}] (Attractivity and boundedness) All the trajectories of the system
    \begequ
    \begin{aligned}
     \dot{z} & ~= ~\nabla \phi^\top(\bfx) [f(\bfx)+g(\bfx)v(\bfx,z)] \\
     \dot{\bfx} & ~=~ f(\bfx) + g(\bfx)v(\bfx,z)
    \end{aligned}
    \lab{auxsys}
    \endequ
    with the initial condition $z(0) =  \phi(\bfx(0))$ and the constraint
\begequ
\lab{concon}
v(\pi(\xi),0) = c(\pi(\xi)),
\endequ
where
\begequ
\lab{c}
c(\pi(\xi)) := -[g^\top(\pi(\xi))g(\pi(\xi))]^{-1}g^\top(\pi(\xi)) \varpi(\xi),
\endequ
are bounded and satisfy
    \begequ
    \label{ztozer}
     \lim_{t\to \infty} z(t) =0.
    \endequ
\end{itemize}
Then the closed-loop system
$
\dot{\bfx} = f(\bfx) + g(\bfx)v(\bfx,\phi(\bfx))
$
is bounded, and has a continuum of non-trivial, non-isolated periodic \emph{attractive} solutions on $\calm$.
\end{proposition}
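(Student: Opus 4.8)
The plan is to establish the conclusion through three pillars---\emph{invariance} of the manifold $\calm$, \emph{conjugacy} of the closed loop restricted to $\calm$ with the target system, and \emph{attractivity} of $\calm$ together with boundedness---and then to assemble them by an $\omega$-limit-set argument. Throughout I work with the off-the-manifold coordinate $z=\phi(\bfx)$, whose dynamics appear in \eqref{auxsys}. As is standard in the I\&I framework, $\pi$ is taken to be an injective immersion, so that $\nabla\pi^\top(\xi)$ has full column rank $p$ and $\calm=\pi(\rea^p)$ by \eqref{impman}.

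First I would prove that $\calm$ is positively invariant. The algebraic heart of the argument is condition {\bf Y2}: the identity $g^\perp(\pi(\xi))\varpi(\xi)=0$ says precisely that $\varpi(\xi)$ lies in the range of $g(\pi(\xi))$, since the rows of the full-rank annihilator $g^\perp$ span the orthogonal complement of that range. Hence the skew component $\big(I_n-g(g^\top g)^{-1}g^\top\big)\varpi$ vanishes, and inserting the boundary control \eqref{concon}--\eqref{c} gives, on $\calm$,
\begin{equation*}
f(\pi(\xi))+g(\pi(\xi))\,c(\pi(\xi))=f(\pi(\xi))-g(g^\top g)^{-1}g^\top\varpi(\xi)=\nabla\pi^\top(\xi)\,\alpha(\xi),
\end{equation*}
where the last equality uses the definition $\varpi=f(\pi)-\nabla\pi^\top\alpha$ from \eqref{fbi}. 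Differentiating the identity $\phi(\pi(\xi))\equiv0$, valid by {\bf Y3}, yields $\nabla\phi^\top(\pi(\xi))\,\nabla\pi^\top(\xi)=0$, so that $\dot z\big|_{z=0}=\nabla\phi^\top(\pi(\xi))\nabla\pi^\top(\xi)\alpha(\xi)=0$; thus $z=0$, i.e.\ $\calm$, is invariant.

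Next I would identify the reduced dynamics. Since $\calm=\pi(\rea^p)$ and $\pi$ is injective, any trajectory remaining on $\calm$ can be written uniquely as $\bfx(t)=\pi(\xi(t))$; differentiating and comparing with $\dot\bfx=\nabla\pi^\top(\xi)\alpha(\xi)$ obtained above gives $\nabla\pi^\top(\xi)\dot\xi=\nabla\pi^\top(\xi)\alpha(\xi)$, and full column rank of $\nabla\pi^\top$ forces $\dot\xi=\alpha(\xi)$. Hence the closed loop restricted to $\calm$ is, via $\pi$, a copy of the target system \eqref{tardyn}. By {\bf Y1} the target admits a continuum of non-trivial periodic solutions parametrized by $\xi(0)$; their images under the injective immersion $\pi$ are non-isolated periodic orbits lying in $\calm$, establishing the existence part of the claim.

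Finally I would close the attractivity and boundedness statement, which is where the main obstacle lies. Condition {\bf Y4} supplies boundedness of all trajectories and $z(t)=\phi(\bfx(t))\to0$. Because the closed loop $\dot\bfx=f(\bfx)+g(\bfx)v(\bfx,\phi(\bfx))$ is autonomous and the trajectory is bounded, its $\omega$-limit set is nonempty, compact and invariant; and since any point $\bar\bfx$ in it is a limit $\bfx(t_k)\to\bar\bfx$ with $\phi(\bar\bfx)=\lim z(t_k)=0$, the $\omega$-limit set is contained in $\calm$. It is therefore a union of complete orbits of the reduced dynamics, which coincide with the target dynamics and are periodic by {\bf Y1}; this is exactly orbital attractivity toward the periodic solutions on $\calm$. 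The delicate point is this final inference---upgrading the manifold convergence $\bfx\to\calm$ to convergence toward an actual periodic orbit---which genuinely requires the invariance of $\omega$-limit sets under the autonomous flow together with their containment in $\calm$. In any concrete instantiation the analytic cost is instead concentrated in verifying {\bf Y4} itself, as the boundedness argument of Proposition \ref{pro2} illustrates.
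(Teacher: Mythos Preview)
The paper does not prove Proposition~\ref{pro1}: it is stated in the Appendix purely as a recalled result from \cite{Ortetal20}, with no proof given. Your proposal therefore cannot be compared against a proof in this paper. That said, your sketch follows the standard I\&I argument and is essentially sound: invariance of $\calm$ via {\bf Y2}--{\bf Y3}, conjugacy of the on-manifold dynamics with the target via injectivity of $\pi$, and attractivity via the $\omega$-limit-set argument using {\bf Y4}. One caveat worth flagging is your final step: you conclude that the $\omega$-limit set, being invariant and contained in $\calm$, consists of periodic orbits of the target. But {\bf Y1} only guarantees periodic solutions for initial conditions ``in the set of interests,'' not for every $\xi(0)\in\rea^p$; the target may also admit equilibria or non-periodic orbits. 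The proposition's conclusion is correspondingly modest---it asserts the \emph{existence} of a continuum of attractive periodic solutions on $\calm$, not convergence of every trajectory to a periodic orbit---so your argument is adequate for what is actually claimed, but you should not overstate the conclusion.
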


\end{appendix}

%



\begin{thebibliography}{100}


\bibitem{Acosta10}
J. A. Acosta, Furuta's Pendulum: A conservative nonlinear model for theory and practise, {\it Math. Probl. Eng.}, vol. 2010, pp. 1--29, 2010. 

\bibitem{APP}
M. P. Appell, Exemple de mouvement d’un point assujetti {\'e} une liaison exprim{\'e}e par une relation non lin{\'e}aire entre les composantes de la vitesse, {\em Rend. Circ. Mat. Palermo}, vol. 32, pp. 48--50, 1911.


\bibitem{SCHSAN}
A. Albu-Sch\"{a}ffer and C. D. Santina, A review on nonlinear modes in conservative mechanical systems, {\em Annu. Rev. Control}, vol. 50, pp. 49--71, 2020.

\bibitem{ARAetal}
J. Aracil, F. Gordillo and E. Ponce, Stabilization of oscillations through backstepping in high-dimensional systems, \TAC, vol. 5, pp. 705--710, 2005.

\bibitem{IIbook}
A. Astolfi, D. Karagiannis and R. Ortega, {\em Nonlinear and Adaptive Control with Applications}, London, UK: Springer, 2007.


\bibitem{BANHAU}
A. Banaszuk and J. Hauser, Feedback linearization of transverse dynamics for periodic orbits, \SCL, vol. 26, pp. 96--105, 1995.




\bibitem{FRAPOG}
A. L. Fradkov and A. Y. Pogromsky, {\em Introduction to Control of Oscillations and Chaos}, World Scientific, 1998.




\bibitem{FREetal}
L. Freidovich, A. Robertsson, A. Shiriaev and R. Johansson, Periodic motion of the Pendubot via virtual holonomic constraints: Theory and experiments, \AUT, vol. 44, pp. 785--791, 2008.

\bibitem{KHA}
H. K. Khalil, {\em Nonlinear Systems}, 3rd Ed., Upper Saddle River, NJ: Prentice-Hall, 2001.


\bibitem{LVetal}
G. Lv, H. Zhu and R. D. Gregg, On the design and control of highly backdrivable lower-limb exoskeletons: A discussion of past and ongoing work, {\em IEEE Control Syst.}, vol. 38, pp. 88--113, 2018.

\bibitem{Ortetal20}
R. Ortega, B. Yi, J. G. Romero and A. Astolfi, Orbital stabilization of nonlinear systems via the immersion and invariance technique, \IJRNLC, vol. 30, pp. 1850--1871, 2020.

\bibitem{ORTbook}
R. Ortega, J. A. L. Perez, P. J. Nicklasson and H. J. Sira-Ramirez, {\em Passivity-based Control of Euler-Lagrange Systems: Mechanical, Electrical and Electromechanical Applications}, London, UK: Springer-Verlag, 1998.


\bibitem{ROMetaltac}
J. G. Romero, R. Ortega and I. Sarras, A globally exponentially stable tracking controller for mechanical systems using position feedback, \TAC, vol. 60, pp. 818--823, 2014.

\bibitem{ROMetalejc}
J. G. Romero, I. Gandarilla and V. Santibanez, Stabilization of a class of nonlinear underactuated mechanical systems with 2-DOF via immersion and invariance, {\em Eur. J. Control}, vol. 63, pp. 196--205, 2022.

\bibitem{ROMetaltcst}
J. G. Romero, I. Gandarilla, V. Santibanez and B. Yi, A constructive procedure for orbital stabilization of a class of underactuated mechanical systems, \CST, 2022. 



\bibitem{SHIetal}
A. Shiriaev, J. W. Perram and C. Canudas-de-Wit, Constructive tool for orbital stabilization of underactuated nonlinear systems: Virtual constraints approach, \TAC, vol. 50, pp. 1164--1176, 2005.

\bibitem{SHIetal2010}
A. S. Shiriaev, L. B. Freidovich and S. V. Gusev, Transverse linearization for controlled mechanical systems with several passive degrees of freedom, \TAC, vol. 55, pp. 893--906, 2010.

\bibitem{SAEetal}
C. F. Saetre, A. Shiriaev, L. Freidovich, S. V. Gusev and L. Fridman, Robust orbital stabilization: A Floquet theory-based approach, \IJRNLC, 2021.


\bibitem{Spong1994}
M. W. Spong, Partial feedback linearization of underactuated mechanical systems, {\em Proc. IEEE/RSJ Int. Conf. Intell. Robots Syst.}, pp. 314--321, Munich, Germany, Sept. 12-16, 1994.

\bibitem{SPOBUL}
M. W. Spong and F. Bullo, Controlled symmetries and passive walking, \TAC, vol. 50, pp. 1025--1031, 2005.


\bibitem{STASEP}
G. B. Stan and R. Sepulchre, Analysis of interconnected oscillators by dissipativity theory. \TAC, vol. 52, pp. 256--270, 2007.

\bibitem{STR}
S. H. Strogatz, {\em Nonlinear Dynamics and Chaos}, Cambridge, MA: Westview Press, 2000.

\bibitem{MAGCON}
M. Maggiore, L. Consolini, Virtual holonomic constraints for Euler–Lagrange systems, \TAC, vol. 58, pp. 1001--1008, 2012.


\bibitem{MARbook}
R. Marino, P. Tomei and C. Verrelli, {\em Induction Motor Control Design}, London, UK: Springer Verlag, 2010.


\bibitem{MOHetal}
A. Mohammadi, M. Maggiore and L. Consolini, On the Lagrangian structure of reduced dynamics under virtual holonomic constraints, {\em ESAIM: Control Optim. Calc. Var.}, vol. 23, pp. 913--915, 2017.

\bibitem{MOHetal2}
A. Mohammadi, M. Maggiore and L. Consolini, Dynamic virtual holonomic constraints for stabilization of closed orbits in underactuated mechanical systems, {\em Automatica}, vol. 94, pp. 112-124, 2018.


\bibitem{NIEetal}
C. Nielsen, C. Fulford and M. Maggiore, Path following using transverse feedback linearization: Application to a maglev positioning system, \AUT, vol. 46, pp. 585--590, 2010.


\bibitem{Toan17}
T. Vinh Toan, T. Thu Ha and T. Vi Do, Hybrid control for swing up and balancing Pendubot system: An experimental result, {\em Proc. Int. Conf. Eng. Sci. Appl.}, pp. 450--453, Ho Chi Minh City, Vietnam, 2019.

\bibitem{WESetal}
E.R. Westervelt et al., {\em Feedback Control of Dynamic Bipedal Robot Locomotion}, CRC Press, 2018.

\bibitem{WIL}
F. W. Wilson (Jr), The structure of level surfaces of a Lyapunov function, {\em J. Differ. Equ.}, vol. 3, pp. 323--329, 1967.

\bibitem{WIN}
A. T. Winfree, Biological rhythms and the behavior of populations of coupled oscillators, {\em J. Theor. Biol.}, vol. 16, pp. 15--24, 1967.

\bibitem{YIetal}
B. Yi, R. Ortega, D. Wu and W. Zhang, Orbital stabilization of nonlinear systems via Mexican sombrero energy shaping and pumping-and-damping injection, \AUT, vol. 112, 108661, 2020.

\bibitem{YIpath}
B. Yi, R. Ortega, I. R. Manchester and H. Siguerdidjane, Path following of a class of
underactuated mechanical systems via immersion and invariance-based orbital stabilization, \IJRNLC, vol. 30, pp. 8521--8544, 2020.

\bibitem{BROLI}
R. W. Brockett  and H. Li, A Light Weight Rotary Double Pendulum: Maximizing the Domain of Attraction, {\em Proc.of the 42nd IEEE   Conference on Decision and Control}, pp. 3299--3304, Maui, Hawaii USA, 2003.


%
%
%
%
%
%
%
%
%
%
%
%
%
%
%
%
%
%
%
%
%
%
%
\end{thebibliography}
\end{document}